\newtheorem{theorem}{Theorem}
\newtheorem{proposition}[theorem]{Proposition}
\newtheorem{lemma}[theorem]{Lemma}
\newtheorem{corollary}[theorem]{Corollary}
\theoremstyle{remark}
\newtheorem{remark}[theorem]{Remark}
\newcommand*{\rom}[1]{\expandafter\@slowromancap\romannumeral #1@}
\newcommand{\R}{\mathbb{R}}
\definecolor{light-gray1}{gray}{0.90}
\definecolor{light-gray2}{gray}{0.80}
\definecolor{light-gray3}{gray}{0.60}
\numberwithin{equation}{section}
\numberwithin{theorem}{section}
\numberwithin{table}{section}
\numberwithin{figure}{section}
\title[Orbital stability for the mass-(super)critical pseudo-relativistic NLS]{Orbital stability for the mass-critical and supercritical pseudo-relativistic nonlinear Schr\"odinger equation}
\date{\today}
\author[Y. Hong]{Younghun Hong}
\address{Department of Mathematics, Chung-Ang University, Seoul 06974, Korea}
\email{yhhong@cau.ac.kr}
\author[S. Jin]{Sangdon Jin}
\address{Department of Mathematics, Chung-Ang University, Seoul 06974, Korea}
\email{sdjin@cau.ac.kr}
\begin{document}

\begin{abstract}

For the one-dimensional mass-critical/supercritical pseudo-relativistic nonlinear Schr\"odinger equation, a stationary solution can be constructed as an energy minimizer under an additional kinetic energy constraint and the set of energy minimizers is orbitally stable \cite{BGV}. In this study, we proved the local uniqueness and established the orbital stability of the solitary wave by improving that of the energy minimizer set. A key aspect thereof is the reformulation of the variational problem in the non-relativistic regime, which we consider to be more natural because the proof extensively relies on the subcritical nature of the limiting model. Thus, the role of the additional constraint is clarified, a more suitable Gagliardo-Nirenberg inequality is introduced, and the non-relativistic limit is proved. Subsequently, this limit is employed to derive the local uniqueness and orbital stability. 
\end{abstract}

\maketitle


\section{Introduction}

We consider the pseudo-relativistic nonlinear Schr\"odinger equation (NLS)
\begin{equation}\label{pNLS0}
i\partial_t u=\left(\sqrt{m^2c^4-c^2\partial_x^2}-mc^2\right)u-|u|^{p-1}u,
\end{equation}
where $u=u(t,x):I(\subset\mathbb{R})\times\mathbb{R}\to\mathbb{C}$. The operator $\sqrt{m^2c^4-c^2\partial_x^2}-mc^2$ is defined as a Fourier multiplier. The constant $c$ represents the speed of light, and $m$ is the particle mass. This operator is called \textit{pseudo-relativistic} or \textit{semi-relativistic} because it describes the intermediate dynamics between the non-relativistic regime $c\gg1$ and the ultra-relativistic regime $0<c\ll1$. When the power-type nonlinearity is replaced by Hartree nonlinearity, the equation is referred to as the boson star equation \cite{FJL, Lenzmann0}.

We suppose that $1<p<5$. Then, the Cauchy problem for \eqref{pNLS0} is locally well-posed in $H^{1/2}(\mathbb{R})$ (see \cite{BR, KLR, FMO, SP})\footnote{The higher-dimensional cases are considered in \cite{BGV}, but the radially symmetric assumption is imposed. }, and the solutions preserve the mass
$$\mathcal{M}(u)=\int_{\mathbb{R}}|u(x)|^2dx$$
and the energy
$$\mathcal{E}_{m,c}(u)=\frac{1}{2}\int_{\mathbb{R}}\left(\sqrt{m^2c^4-c^2\partial_x^2}-mc^2\right)u(x)\overline{u(x)} dx-\frac{1}{p+1}\int_{\mathbb{R}}|u(x)|^{p+1}dx.$$
When the nonlinearity is mass-subcritical, that is, $1<p<3$, the equation \eqref{pNLS0} is globally well-posed in $H^{\frac{1}{2}}(\mathbb{R})$, and the corresponding energy minimization problem
\begin{equation}\label{minimization0}
\mathcal{J}(M):=\inf\Big\{\mathcal{E}_{m,c}(u): u\in H^{\frac{1}{2}}(\mathbb{R})\textup{ and }\mathcal{M}(u)=M\Big\}
\end{equation}
yields an orbitally stable ground state (see \cite{BGV, CL}). In contrast, in the mass-critical or supercritical case $p\in[3,5)$, a blow-up is expected to occur (see \cite{KLR, BHL}), and the variational problem \eqref{minimization0} is not appropriately formulated because $\mathcal{J}(M)=-\infty$.

The meaning of the mass-critical nonlinearity is however ambiguous because of its semi-relativistic nature. In the non-relativistic regime $c\gg1$, equation \eqref{pNLS0} is formally approximated by the mass-subcritical non-relativistic NLS 
$$i\partial_t u=-\tfrac{1}{2m}\partial_x^2 u-|u|^{p-1}u.$$
Based thereupon, in the non-relativistic regime, an orbitally stable state can be constructed from a modified energy minimization problem, which is one of the main observations by Bellazzini, Georgiev, and Visciglia \cite{BGV} (see also \cite{BBJV} for orbitally stable ground states to NLS with a partial
confinement). Precisely, the energy minimization problem with an additional constraint 
\begin{equation}\label{BGV minimization}
\inf\Big\{\mathcal{E}_{1,1}(u): \|u\|_{H^{\frac{1}{2}}(\mathbb{R})}\leq\tfrac{1}{2}\textup{ and }\mathcal{M}(u)=M\Big\}.
\end{equation}
Furthermore, the existence of an energy minimizer is established, provided that the mass $M$ is sufficiently small. Then, using the standard argument \cite{CL, Lions}, the orbital stability of the set of local energy minimizers follows.

In this study, we revisit the variational problem \eqref{BGV minimization}, but we rescale it to clarify the connection to the non-relativistic problem, as in the work by Lenzmann \cite{Lenzmann}. Hereafter, we take $m=\frac{1}{2}$ for numerical simplicity and denote 
$$\mathcal{H}_c:=\sqrt{-c^2\partial_x^2+\tfrac{c^4}{4}}-\tfrac{c^2}{2}\quad\textup{and}\quad \mathcal{E}_c(u):=\mathcal{E}_{\frac{1}{2},c}(u)=\frac{1}{2}\|\sqrt{\mathcal{H}_c}u\|_{L^2(\mathbb{R})}^2-\frac{1}{p+1}\|u\|_{L^{p+1}(\mathbb{R})}^{p+1}.$$
Then, instead of fixing $c=1$, we choose $M>0$ (where $M$ is not necessarily small) and consider the energy minimization problem 
\begin{equation}\label{pNLS minimization}
\mathcal{J}_c(M):=\inf\Big\{\mathcal{E}_c(u): \|\sqrt{\mathcal{H}_c}u\|_{L^2(\mathbb{R})}\leq c^{\frac{p+3}{2(p-1)}} \textup{ and }\mathcal{M}(u)=M\Big\},
\end{equation}
which is equivalent to \eqref{BGV minimization} by scaling (see Section \ref{sec:scaling}). Using the formal series expansion $\sqrt{-c^2\partial_x^2+\frac{c^4}{4}}-\frac{c^2}{2}=-\partial_x^2-\tfrac{1}{c^2}(\partial_x^2)^2+\cdots$, via the non-relativistic limit $c\to\infty$, 
the variational problem \eqref{pNLS minimization} is closely related to the mass-subcritical non-relativistic energy minimization problem
\begin{equation}\label{non-relativistic variational problem}
\mathcal{J}_\infty(M):=\inf\Big\{\mathcal{E}_\infty(u): u\in H^1(\mathbb{R})\textup{ and }\mathcal{M}(u)=M\Big\},
\end{equation}
where
\begin{equation}\label{non-relativistic energy}
\mathcal{E}_\infty(u):=\frac{1}{2}\|\partial_x u\|_{L^2(\mathbb{R})}^2-\frac{1}{p+1}\|u\|_{L^{p+1}(\mathbb{R})}^{p+1}.
\end{equation}
Although this connection was previously employed in \cite{BGV}, it is rather implicit.

Motivated by the formal convergence, we state the existence of a minimizer for the variational problem \eqref{pNLS minimization} as follows.
\begin{theorem}[Minimizer for $\mathcal{J}_c(M)$ ]\label{cptness}
Let $p\in [3,5)$, and suppose that
$$c\geq \max\{(\alpha M)^\frac{p-1}{5-p}, (\alpha^\frac{4}{p+3}M)^\frac{p-1}{5-p}\},$$
where $\alpha$ is given by \eqref{alpha}. Then, the minimization problem $\mathcal{J}_c(M)$ possesses a minimizer, which must be of the following form:
$$e^{i\theta}Q_c(\cdot-x_0)\quad\textup{with }x_0\in\mathbb{R},\ \theta\in\mathbb{R},$$
where $Q_c$ is non-negative symmetric, and it solves the elliptic equation
\begin{equation}\label{pNLS elliptic}
\mathcal{H}_cQ_c-Q_c^p=-\mu_cQ_c.
\end{equation}
Moreover, if $3<p<5$ and $c\geq \max\{(\alpha M)^\frac{p-1}{5-p}, (\alpha^\frac{4}{p+3}M)^\frac{p-1}{5-p},(\frac{M}{p-3})^\frac{p-1}{5-p}\}$, then $Q_c$ is a ground state on $\mathcal{N}$ in the sense that under the mass constraint $\mathcal{M}(v)=M$, it occupies the least energy among solutions to the nonlinear elliptic equation $\mathcal{H}_cu-u^p=-\lambda u$ for some $\lambda>0$.\footnote{Precisely, $
\mathcal{E}_c(Q_c)=\inf \big\{\mathcal{E}_c(u): \mathcal{E}_c|_\mathcal{N}'(u)=0 \big\}$, where $\mathcal{N}=\big\{v\in H^\frac12(\R): \mathcal{M}(v)=M\big\}$.}
\end{theorem}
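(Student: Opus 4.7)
The plan is to carry out the direct method of the calculus of variations on the two-constraint problem \eqref{pNLS minimization}, combined with symmetric decreasing rearrangement and the Gagliardo--Nirenberg inequality for $\sqrt{\mathcal{H}_c}$ that the paper sets up just before this statement. Using such an inequality one controls $\|u\|_{L^{p+1}}^{p+1}$ by a product involving $\|\sqrt{\mathcal{H}_c}u\|_{L^2}$ with an exponent strictly less than $2$; together with the kinetic constraint $\|\sqrt{\mathcal{H}_c}u\|_{L^2}\le c^{(p+3)/(2(p-1))}$ and the lower bound on $c$ in terms of $M$, this yields $\mathcal{J}_c(M)>-\infty$ and ensures that any minimizing sequence is bounded in $H^{1/2}(\R)$. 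The upper constraint thus plays exactly the role of restoring the subcritical structure of the limiting non-relativistic problem \eqref{non-relativistic variational problem}, which is why the constants $\alpha$ and the two powers of $M$ appear together.

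Given a minimizing sequence $\{u_n\}$, since the Fourier symbol of $\mathcal{H}_c$ is even and increasing in $|\xi|$, the Riesz rearrangement inequality gives $\|\sqrt{\mathcal{H}_c}u_n^*\|_{L^2}\le \|\sqrt{\mathcal{H}_c}u_n\|_{L^2}$, while $\mathcal{M}$ and $\|\cdot\|_{L^{p+1}}$ are invariant. I may therefore assume $u_n$ is nonnegative symmetric decreasing. Extracting a weak limit $u_n\rightharpoonup Q_c$ in $H^{1/2}(\R)$, the Strauss-type pointwise decay $|u_n(x)|\le C|x|^{-1/2}$ for symmetric decreasing $L^2$ functions combined with the uniform $H^{1/2}$ bound upgrades the convergence to strong convergence in $L^{p+1}(\R)$. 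Weak lower semicontinuity of the kinetic norm then gives $\mathcal{E}_c(Q_c)\le \mathcal{J}_c(M)$, and $\mathcal{M}(Q_c)=M$ is secured by ruling out mass loss through a strict subadditivity estimate $\mathcal{J}_c(M)<\mathcal{J}_c(M')$ for $0<M'<M$, obtained by a simple rescaling competitor using the subcritical effective nonlinearity.

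The main technical obstacle is to derive the elliptic equation \eqref{pNLS elliptic}: because the admissible set is cut out by an inequality, the Kuhn--Tucker conditions would a priori produce an additional multiplier attached to $\|\sqrt{\mathcal{H}_c}u\|_{L^2}^2\le c^{(p+3)/(p-1)}$, giving a nonstandard equation. The decisive step is therefore to show the kinetic constraint is strictly inactive at $Q_c$. I would establish this quantitatively by comparing with the non-relativistic problem $\mathcal{J}_\infty(M)$: a non-relativistic minimizer has finite kinetic energy independent of $c$, and testing $\mathcal{J}_c(M)$ against an appropriate rescaling of it, under the hypothesis on $c$, forces $\|\sqrt{\mathcal{H}_c}Q_c\|_{L^2}^2$ to stay well inside the admissible ball. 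Once the extra multiplier is killed, the standard Lagrange rule yields $\mathcal{H}_cQ_c-Q_c^p=-\mu_cQ_c$, and pairing with $Q_c$ together with a Pohozaev-type identity gives $\mu_c>0$.

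For the ground-state claim when $3<p<5$, I would show that every critical point $v$ of $\mathcal{E}_c|_{\mathcal{N}}$, equivalently every solution of $\mathcal{H}_cv-v^p=-\lambda v$ with $\mathcal{M}(v)=M$, satisfies both its Nehari identity and a Pohozaev-type identity. Combined, these bound $\|\sqrt{\mathcal{H}_c}v\|_{L^2}^2$ by a quantity of order $M/(p-3)$ times the appropriate power of $c$, and the third assumption $c\ge (M/(p-3))^{(p-1)/(5-p)}$ is precisely what places every such $v$ inside the admissible set of \eqref{pNLS minimization}. Since $Q_c$ is a minimizer there, we obtain $\mathcal{E}_c(Q_c)\le \mathcal{E}_c(v)$ for every critical point $v$ on $\mathcal{N}$, completing the ground-state characterization.
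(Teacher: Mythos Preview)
Your proposal is correct and close in spirit to the paper's argument, with one genuine methodological difference in the compactness step. The paper does \emph{not} symmetrize the minimizing sequence first: it takes a general minimizing sequence, extracts a weak $H^{1/2}$ limit $\tilde u$, shows $\tilde u\neq 0$ from $\mathcal{J}_c(M)<0$ (Lemma~\ref{minimum energy comparison}), and then rules out the dichotomy $\|\tilde u\|_{L^2}^2=M'\in(0,M)$ via a two-sided strict subadditivity $\mathcal{J}_c(M)<\mathcal{J}_c(M')+\mathcal{J}_c(M-M')$, where admissibility of the rescaled competitors $\sqrt{M/M'}\,v_n$ is checked using the \emph{refined} constraint $\|\sqrt{\mathcal{H}_c}u\|_{L^2}\le \alpha^{2/(5-p)}M^{(p+3)/(2(5-p))}$ of Lemma~\ref{local min structure}; strong $L^{p+1}$ convergence then comes from the modified Gagliardo--Nirenberg inequality, and rearrangement is applied only \emph{a posteriori} to show the minimizer equals its own symmetrization. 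Your route---symmetrize first, exploit the elementary monotone bound $|u_n(x)|\le (M/2|x|)^{1/2}$ to get $L^{p+1}$ compactness directly, and then invoke only the one-sided inequality $\mathcal{J}_c(M)<\mathcal{J}_c(M')$ to exclude $L^2$ mass loss---is equally valid in one dimension and arguably cleaner, since it pins the profiles at the origin and bypasses the vanishing/translation issue entirely. Note that your rescaling competitor $\sqrt{M/M'}\,v$ also needs the refined constraint of Lemma~\ref{local min structure} (not just the original bound $c^{(p+3)/(2(p-1))}$) to remain admissible; this is the same point the paper uses. Your treatment of the inactive kinetic constraint (negative energy plus the Gagliardo--Nirenberg separation estimate forces the kinetic energy strictly inside the ball) and of the ground-state claim (Nehari plus Pohozaev identity places every constrained critical point in the admissible set when $c\ge(M/(p-3))^{(p-1)/(5-p)}$) match the paper's Corollary~\ref{separation of initial data}, Lemma~\ref{local min structure}, and Appendix~\ref{sec: ground state} essentially verbatim.
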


\begin{remark}
\begin{enumerate}
\item Theorem \ref{cptness} was essentially proved in \cite{BGV}. The difference is that the assumption of a small mass $M\ll1$ \cite{BGV} is transferred to the non-relativistic regime assumption $c\gg1$. Indeed, it could be regarded as simply being a matter of scaling, but it is helpful to clarify the setup of the problem and to improve our intuition. This leads us to modify the Gagliardo-Nirenberg inequality, which is a more appropriate fit for the pseudo-relativistic operator (Proposition \ref{GN type inequality}), but the constraint in the minimization problem is also refined (Lemma \ref{local min structure}). Consequently, we can obtain several useful properties of the minimizer, which can be employed to prove our main result.
\item A similar existence theorem was established for the mass-critical/supercritical pseudo-relativistic Hartree-type equation \cite{LY1}. 
\end{enumerate}
\end{remark}

Our main result provides the uniqueness and the orbital stability of the ground state.

\begin{theorem}\label{thm: uniqueness}
Let $p\in [3,5)$. For any sufficiently large $c\geq1$, let $Q_c$ be an energy minimizer for $\mathcal{J}_c(M)$ constructed in Theorem \ref{cptness}.
\begin{enumerate}[(i)]
\item $Q_c$ is unique up to translation and phase shift. 
\item For $\epsilon>0$, there exists $\delta>0$ such that if 
$$\inf_{x_0,\theta_0\in\mathbb{R}} \|\sqrt{1+\mathcal{H}_c}(e^{i\theta_0}u_0(x-x_0)-Q_c(x))\|_{L^2(\R)}\leq\delta,$$
then the global solution $u(t)$ to
\begin{equation}\label{pNLS}
i\partial_t u=\left(\sqrt{\tfrac{c^4}{4}-c^2\partial_x^2}-\tfrac{c^2}{2}\right)u-|u|^{p-1}u
\end{equation}
with the initial data $u_0$ satisfies
$$\inf_{x_1,\theta_1\in\mathbb{R}}\big\|\sqrt{1+\mathcal{H}_c}(e^{i\theta_1}u(t,x-x_1)-Q_c(x) )\big\|_{L^2(\R)}\leq\epsilon \quad\textup{for all }t\in\mathbb{R}.$$
\end{enumerate}
\end{theorem}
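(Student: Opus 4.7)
The guiding idea is to regard \eqref{pNLS minimization} in the regime $c\gg 1$ as a smooth perturbation of the mass-subcritical non-relativistic problem \eqref{non-relativistic variational problem} and to transfer the well-developed theory of the limit back to the pseudo-relativistic equation. I would prove (i) first and deduce (ii) from it.

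The first step is a non-relativistic limit for minimizers: up to translation and phase, any family $\{Q_c\}$ from Theorem \ref{cptness} converges as $c\to\infty$ to the unique positive even ground state $Q_\infty$ of $-\partial_x^2 Q_\infty-Q_\infty^p+\mu_\infty Q_\infty=0$ with $\mathcal{M}(Q_\infty)=M$. The formal expansion $\mathcal{H}_c=-\partial_x^2+O(c^{-2}\partial_x^4)$ on frequencies $\ll c$, together with the kinetic-energy constraint in \eqref{pNLS minimization}, yields (after the scaling of Section \ref{sec:scaling}) a uniform $H^1$ bound, enough compactness to pass to the limit, and the identification $\mathcal{J}_c(M)\to \mathcal{J}_\infty(M)$; the classical Cazenave--Lions uniqueness on $\mathbb{R}$ then forces the limit to be $Q_\infty$. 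In particular $\mu_c\to\mu_\infty>0$.

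The second step upgrades this limit to local uniqueness via an implicit function theorem. The linearized operator $L_+^\infty=-\partial_x^2+\mu_\infty-p Q_\infty^{p-1}$ at $Q_\infty$ is non-degenerate by Weinstein's theorem, and $L_-^\infty=-\partial_x^2+\mu_\infty-Q_\infty^{p-1}$ has kernel $\mathrm{span}\{Q_\infty\}$. Writing \eqref{pNLS elliptic} coupled with the mass constraint as $F(Q,\mu;\varepsilon)=0$ with $\varepsilon=1/c^2$, restricted to the slice of even functions orthogonal to the kernel directions, the Fr\'echet derivative at $(Q_\infty,\mu_\infty,0)$ is invertible, producing a unique smooth solution branch for $c$ large. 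The first step confines every minimizer of $\mathcal{J}_c(M)$ to this neighborhood, which yields (i). With uniqueness in hand, the minimizer set from \cite{BGV} reduces to the single orbit $\{e^{i\theta}Q_c(\cdot-x_0)\}$, so the set stability proved there is already orbital stability of the solitary wave; inspecting the standard concentration-compactness argument and using the refined Gagliardo--Nirenberg inequality (Proposition \ref{GN type inequality}) to control the $L^{p+1}$ term promotes weak subsequential convergence of near-minimizers to strong convergence in the norm $\|\sqrt{1+\mathcal{H}_c}\,\cdot\,\|_{L^2}^2=\mathcal{M}(\cdot)+\|\sqrt{\mathcal{H}_c}\,\cdot\,\|_{L^2}^2$, which gives (ii).

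The main obstacle is the implicit function theorem of the second step: one must show that the linearized pseudo-relativistic operator at $Q_c$ inherits non-degeneracy \emph{uniformly} in $c\gg 1$, even though the perturbation $\mathcal{H}_c-(-\partial_x^2)$ is unbounded relative to $-\partial_x^2$. I would address this by exploiting the frequency localization of $Q_c$ at scales $\ll c$ furnished by the kinetic-energy constraint, where $\mathcal{H}_c$ agrees with $-\partial_x^2$ up to $O(c^{-2})$, and by closing the inverse function step in a $c$-independent function space compatible with both operators; this is precisely where the reformulation of the variational problem and the refined Gagliardo--Nirenberg inequality highlighted in the authors' remark earn their keep.
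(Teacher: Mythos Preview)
Your plan is sound and would succeed, but your second step diverges from the paper's argument. Both proofs share the first step exactly: the non-relativistic limit $Q_c\to Q_\infty$ in $H^1$ together with $\mu_c\to\mu_\infty$ (Proposition~\ref{prop: non-relativistic limit}), and both ultimately rest on Weinstein's non-degeneracy of $L_+^\infty$. Where you invoke an implicit function theorem on the map $F(Q,\mu;\varepsilon)=0$, $\varepsilon=1/c^2$, to produce a unique local branch (essentially the Lenzmann/\cite{CHS} contraction route acknowledged in the remark following Proposition~\ref{prop: non-relativistic limit}), the paper instead transfers Weinstein's coercivity to the pseudo-relativistic linearized operator $\mathcal{L}_c=\mathcal{H}_c+\mu_c-pQ_c^{p-1}$ purely at the level of quadratic forms: Lemma~\ref{eigv} shows $\langle\mathcal{L}_c v,v\rangle\ge C\|v\|_{H^{1/2}}^2$ uniformly for large $c$ on the even slice orthogonal to $Q_c$, by a variational argument that compares minimizing sequences for the Rayleigh quotient of $\mathcal{L}_c$ with that of $\mathcal{L}_\infty$. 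Uniqueness then follows by contradiction: a second minimizer is decomposed as $\tilde Q_c=\sqrt{1-\delta_c^2}\,Q_c+\epsilon_c$ with $\langle\epsilon_c,Q_c\rangle=0$, and a second-order expansion of the shifted functional $\mathcal{I}_c=\mathcal{E}_c+\tfrac{\mu_c}{2}\mathcal{M}$ combined with the coercivity forces $\epsilon_c=0$. Your IFT route would additionally yield a smooth solution branch and quantitative $O(c^{-2})$ control, at the cost of the technical issue you correctly flag---making the map $C^1$ in a space where $\mathcal{H}_c+\partial_x^2$ is tame. The paper's energy-comparison route sidesteps this entirely: it needs only coercivity of $\mathcal{L}_c$ as a \emph{form}, which follows softly from $Q_c\to Q_\infty$, $\mu_c\to\mu_\infty$, and a uniform $H^2$ bound on the comparison function, without any operator-norm control of the perturbation. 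For part~(ii) your reasoning matches the paper's (implicit) argument: once the minimizer set is a single orbit, the set stability of \cite{BGV} is already the claimed orbital stability of the solitary wave.
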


\begin{remark}
Theorem \ref{thm: uniqueness} $(i)$ can be extended to the multidimensional case $d\geq 2$ with power-type nonlinearity $-|u|^{p-1}u$ with $1+\frac{2}{d}<p<1+\frac{4}{d}$. However, because the well-posedness of the time-dependent problem \eqref{pNLS0} is known under the radial assumption, only the orbital stability against symmetric perturbations can be obtained (see \cite{BGV}).
\end{remark}
\begin{remark}
By using scaling in Section \ref{sec:scaling}, for small $M>0$, one can find a minimizer $u_M$ for \eqref{BGV minimization}. Then, our main result can be reformulated as follows: for $\epsilon>0$, there exists $\delta>0$ such that if 
$$\inf_{x_0,\theta_0\in\mathbb{R}} \| e^{i\theta_0}v_0(x-x_0)-u_M(x) \|_{H^{\frac{1}{2}}(\R)}\leq\delta,$$then the global solution $v(t)$ to
\begin{equation*} i\partial_t v=\sqrt{1- \partial_x^2}v-|v|^{p-1}v
\end{equation*}
with the initial data $v_0$ satisfies
$$\inf_{x_1,\theta_1\in\mathbb{R}}\big\| e^{i\theta_1}v(t,x-x_1)-u_M(x) \big\|_{H^{\frac{1}{2}}(\R)}\leq\epsilon\quad\textup{for all }t\in\mathbb{R}.$$
Note that, compared with the previous result \cite{BGV}, the possibility of transforming into a different low-energy state is eliminated.
\end{remark}

To prove the main theorem, we employ a connection to the non-relativistic variational problem $\mathcal{J}_\infty(M)$. Let $Q_\infty$ be a symmetric, positive decreasing ground state for $\mathcal{J}_\infty(M)$. This minimizer is known to be unique up to translation and phase shift and solves the nonlinear elliptic equation 
$$-\partial_x^2 Q_\infty-Q_\infty^p=-\mu_\infty Q_\infty,$$
where $\mu_\infty$ is the Lagrange multiplier. A key step in our analysis is to show the convergence from relativistic to non-relativistic minimizers.

\begin{proposition}[Non-relativistic limit]\label{prop: non-relativistic limit}
Let $p\in [3,5)$. For a sufficiently large $c\geq 1$, let $Q_c$ be the ground state constructed in Theorem \ref{cptness}. Then, $Q_c\to Q_\infty$ in $H^1(\mathbb{R})$ and $\mu_c\to\mu_\infty$ as $c\to\infty$.
\end{proposition}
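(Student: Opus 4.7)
My plan is to exploit the pointwise convergence of the Fourier symbol
$\tfrac{c^2}{2}\sqrt{1+4\xi^2/c^2}-\tfrac{c^2}{2}\to\xi^2$
as $c\to\infty$ and to run a concentration-compactness argument that transports the minimizer of $\mathcal{J}_c(M)$ onto a minimizer of $\mathcal{J}_\infty(M)$. The first step is the upper bound $\limsup_{c\to\infty}\mathcal{J}_c(M)\leq\mathcal{J}_\infty(M)$: the scalar inequality $\sqrt{1+y}\leq 1+y/2$ yields $\|\sqrt{\mathcal{H}_c}Q_\infty\|_{L^2}\leq\|\partial_x Q_\infty\|_{L^2}$ uniformly in $c$, while the constraint threshold $c^{(p+3)/(2(p-1))}$ grows to infinity, so $Q_\infty\in H^1(\R)$ is admissible for \eqref{pNLS minimization} once $c$ is large. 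Pointwise symbol convergence, dominated by $\xi^2|\hat Q_\infty(\xi)|^2$, then delivers $\mathcal{E}_c(Q_\infty)\to\mathcal{E}_\infty(Q_\infty)=\mathcal{J}_\infty(M)$.

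Next I would establish uniform-in-$c$ $H^1$-bounds on $Q_c$. The upper bound, combined with the Gagliardo--Nirenberg inequality of Proposition \ref{GN type inequality}, whose mass-subcritical flavor is tailored to this setup, furnishes uniform control of $\|\sqrt{\mathcal{H}_c}Q_c\|_{L^2}$ and of $\|Q_c\|_{L^{p+1}}$. The pointwise symbol bound $\tfrac{c^2}{2}\sqrt{1+4\xi^2/c^2}-\tfrac{c^2}{2}\gtrsim\min\{\xi^2,\,c|\xi|\}$ then controls the low-frequency gradient $\int_{|\xi|\leq c}\xi^2|\hat Q_c(\xi)|^2\,d\xi$ by $\|\sqrt{\mathcal{H}_c}Q_c\|_{L^2}^2$. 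For the high-frequency tail I would bootstrap through the Euler--Lagrange equation \eqref{pNLS elliptic}: on the Fourier side, $\hat Q_c(\xi)=\widehat{Q_c^p}(\xi)/(\sigma_c(\xi)+\mu_c)$ where $\sigma_c(\xi)=\tfrac{c^2}{2}\sqrt{1+4\xi^2/c^2}-\tfrac{c^2}{2}\gtrsim c|\xi|$ on $|\xi|>c$, so $\int_{|\xi|>c}\xi^2|\hat Q_c(\xi)|^2\,d\xi\lesssim c^{-2}\|Q_c^p\|_{L^2}^2=o(1)$. Testing \eqref{pNLS elliptic} against $Q_c$ also produces a uniform bound on $\mu_c$.

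With these bounds in hand, I would extract a weakly convergent subsequence $Q_c\rightharpoonup Q_*$ in $H^1(\R)$. The symmetric, decreasing structure of $Q_c$ from Theorem \ref{cptness} (available after Schwarz rearrangement, which decreases $\|\sqrt{\mathcal{H}_c}\cdot\|_{L^2}$ since the symbol is a non-negative, even, increasing function of $|\xi|$) forces the pointwise decay $|Q_c(x)|^2\leq M/(2|x|)$, which prevents any escape of mass to infinity and upgrades the weak convergence to strong convergence in $L^2(\R)$, and by interpolation with the uniform $H^1$ bound, also in $L^{p+1}(\R)$. Weak lower semicontinuity of $\|\partial_x\cdot\|_{L^2}$ together with the pointwise symbol convergence gives $\mathcal{E}_\infty(Q_*)\leq\liminf_{c\to\infty}\mathcal{E}_c(Q_c)\leq\mathcal{J}_\infty(M)$ with $\|Q_*\|_{L^2}^2=M$, so $Q_*$ is a non-relativistic minimizer; the classical uniqueness of the positive symmetric ground state identifies $Q_*=Q_\infty$. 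Norm and energy convergence upgrade the weak $H^1$ convergence to strong $H^1$ convergence, and passing to the limit in the identity $\|\sqrt{\mathcal{H}_c}Q_c\|_{L^2}^2-\|Q_c\|_{L^{p+1}}^{p+1}=-\mu_c M$ yields $\mu_c\to\mu_\infty$.

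The principal obstacle I anticipate is the uniform $H^1$-regularity, since the constraint in \eqref{pNLS minimization} a priori provides only $H^{1/2}$-type control, which is inadequate for placing the weak limit in the non-relativistic space $H^1$. The elliptic-equation bootstrap on the high-frequency tail is the key device that spans the gap between the relativistic and non-relativistic variational landscapes; a secondary concern, ruling out mass loss in the passage to the limit, is handled by the Schwarz symmetry together with the strict negativity of $\mathcal{J}_\infty(M)$ in the mass-subcritical regime.
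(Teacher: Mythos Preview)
Your strategy is sound and shares the paper's skeleton: the upper bound $\mathcal{J}_c(M)\leq\mathcal{J}_\infty(M)$, uniform regularity on $Q_c$ via the Euler--Lagrange equation, and passage to the limit using uniqueness of $Q_\infty$. The execution differs in two respects. The paper pushes the bootstrap all the way to a uniform $H^2$ bound (Lemma~\ref{elliptic regularity}), obtains the quantitative estimate $\|\partial_x Q_c\|_{L^2}^2-\|\sqrt{\mathcal{H}_c}Q_c\|_{L^2}^2\leq c^{-2}\|Q_c\|_{\dot H^2}^2$, and thereby shows directly that $\{Q_c\}$ is a minimizing sequence for $\mathcal{J}_\infty(M)$; it then invokes Lions's concentration-compactness for $\mathcal{J}_\infty(M)$ as a black box. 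You stop at $H^1$ and instead run a self-contained compactness argument using the radial decay $|Q_c(x)|^2\leq M/(2|x|)$, together with Fatou on the Fourier side for the lower semicontinuity step---a neat and more hands-on use of the symmetric-decreasing structure.

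There is, however, one genuine loose end in your final step. ``Norm and energy convergence'' only delivers $\|\sqrt{\mathcal{H}_c}Q_c\|_{L^2}\to\|\partial_x Q_\infty\|_{L^2}$, not $\|\partial_x Q_c\|_{L^2}\to\|\partial_x Q_\infty\|_{L^2}$; the defect $\int(\xi^2-\sigma_c(\xi))|\hat Q_c|^2\,d\xi$ need not vanish from a uniform $H^1$ bound alone, because $\xi^2-\sigma_c(\xi)\sim \xi^4/c^2$ and $\hat Q_c$ is varying with $c$. The remedy is already latent in your bootstrap: iterate once more (using the uniform positive lower bound $\mu_c\geq -\tfrac{p+1}{M}\mathcal{J}_\infty(M)$, which follows from $\mathcal{E}_c(Q_c)\leq\mathcal{J}_\infty(M)<0$) to obtain $\sup_c\|Q_c\|_{H^2}<\infty$, exactly the paper's Lemma~\ref{elliptic regularity}, after which the defect is $O(c^{-2})$ and the upgrade to strong $H^1$ convergence follows.
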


By the convergence $Q_c\to Q_\infty$, we obtain a certain coercivity estimate for the linearized operator at $Q_c$ (see Lemma \ref{eigv}). Then, using this estimate, we deduce a contradiction by comparing the energies, provided that there are two minimizers (see Section \ref{unq}).

\begin{remark}
In the mass-critical/supercritical case, a radially symmetric solution to \eqref{pNLS elliptic} is constructed near $Q_\infty$ in \cite{CHS} via the contraction mapping argument. The convergence $Q_c\to Q_\infty$ and the local uniqueness show that it is identified with the solution obtained by the variational method. 
\end{remark}

\subsection{Organization of this paper} 
The remainder of this paper is organized as follows. In Section \ref{pre}, we provide preliminaries for the proof of the main results. This section describes a scaling property, a modified Gagliardo-Nirenberg inequality, well-posedness of the pseudo-relativistic NLS, and non-relativistic minimization problem. In Section \ref{psm}, we prove Theorem \ref{cptness} using the modified Gagliardo-Nirenberg inequality and the results for the non-relativistic minimization problem. In Section \ref{nlim}, we present the non-relativistic limit of a minimizer $Q_c$ for $\mathcal{J}_c(M)$ (Proposition \ref{prop: non-relativistic limit}). In Section \ref{unq}, we establish the uniqueness of minimizer $Q_c$ for $\mathcal{J}_c(M)$ (Theorem \ref{thm: uniqueness}). Finally, in Appendix \ref{sec: ground state}, for completeness of the paper, we present that a minimizer $Q_c$ is a ground state solution as stated in Theorem \ref{cptness}.

\subsection{Acknowledgment}
This research was supported by the Basic Science Research Program through the National Research Foundation of Korea (NRF) funded by the Ministry of Science and ICT (NRF-2020R1A2C4002615).

\section{Preliminaries}\label{pre}

\subsection{Scaling}\label{sec:scaling}
By a simple change of the variable, the parameter $c$ in the variational problem \eqref{pNLS minimization} is scaled as follows. Let $u(x)= c^{\frac{2}{p-1}}v(c x)$. Then, we have 
$$\begin{aligned}
\mathcal{M}(u)&= c^{ \frac{5-p}{p-1}} \mathcal{M}(v),\\
\mathcal{E}_{m,c}(u)&= c^{\frac{p+3}{p-1}}\mathcal{E}_{m,1}(v),\\
\|\sqrt{\mathcal{H}_{m,c}}u\|_{L^2(\mathbb{R})}&= c^{\frac{p+3}{2(p-1)}}\|\sqrt{\mathcal{H}_{m,1}}v\|_{L^2(\mathbb{R})},
\end{aligned}$$
where $\mathcal{H}_{m,c}=\sqrt{m^2c^4-c^2\partial_x^2}-mc^2$. Thus, the minimization problem 
$$\inf\left\{\mathcal{E}_{m,c}(u): \|\sqrt{\mathcal{H}_{m,c}}u\|_{L^2(\mathbb{R})}\leq  c^{\frac{p+3}{2(p-1)}} \textup{ and }\mathcal{M}(u)=M\right\}$$
is equivalent to
$$\inf\left\{\mathcal{E}_{m,1}(v): \|\sqrt{\mathcal{H}_{m,1}}v\|_{L^2(\mathbb{R})}\leq 1 \textup{ and }\mathcal{M}(v)=c^{-\frac{5-p}{p-1}}M\right\}.$$

\subsection{Modified Gagliardo-Nirenberg inequality}
The Gagliardo-Nirenberg inequality states that, for $p\geq 1$ with $\frac{1}{p+1}>\frac{1-2s}{2}$, 
\begin{equation}\label{standard GN inequality}
\|u\|_{L^{p+1}(\mathbb{R})}^{p+1}\leq C_{s,p+1}\|u\|_{L^2(\mathbb{R})}^{\frac{(2s-1)p+(2s+1)}{2s}} \||\partial_x|^su\|_{L^2(\mathbb{R})}^{\frac{p-1}{2s}},
\end{equation}
where $u\in L^2(\R)$, $|\partial_x|^s u\in L^2(\R)$ and $C_{s,p+1}$ is a sharp constant. For the pseudo-relativistic NLS and its non-relativistic limit, this inequality is modified considering that the pseudo-relativistic operator acts differently on low and high frequencies.

\begin{proposition}[Modified Gagliardo-Nirenberg inequality]\label{GN type inequality}
For $p>1$, let
$$C_{GN}=2^{\frac{3p-1}{2}}\max\{C_{1,p+1}, C_{\frac{1}{2},p+1}\},$$
where $C_{s,p+1}$ is the sharp constant for the Gagliardo-Nirenberg inequality \eqref{standard GN inequality}. Then, for any $c\geq 1$ and $\delta\in(0,1]$, we have
$$\|u\|_{L^{p+1}(\mathbb{R})}^{p+1}\leq C_{GN} \left\{\|u\|_{L^2(\mathbb{R})}^{\frac{p+3}{2}}\|\sqrt{\mathcal{H}_c} P_{\leq c\delta}u\|_{L^2(\mathbb{R})}^{\frac{p-1}{2}}+\frac{1}{(c\delta)^{\frac{p-1}{2}}}  \|u\|_{L^2(\mathbb{R})}^2\|\sqrt{\mathcal{H}_c} P_{>c\delta}u\|_{L^2(\mathbb{R})}^{p-1}\right\},$$
where $P_{\leq c\delta }u=(\mathbf{1}_{|\cdot|\leq c\delta }\hat{u})^\vee$ and $P_{>c\delta}u=u-P_{\leq c\delta}u$.
\end{proposition}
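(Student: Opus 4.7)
The plan is to decompose $u$ into low and high frequency pieces using the cutoff at $|\xi|=c\delta$ and apply the standard Gagliardo--Nirenberg inequality \eqref{standard GN inequality} with $s=1$ on the low frequencies and $s=1/2$ on the high frequencies. The core observation is that the symbol of $\mathcal{H}_c$, namely $\sigma_c(\xi)=\sqrt{c^2\xi^2+c^4/4}-c^2/2 = \dfrac{c^2\xi^2}{\sqrt{c^2\xi^2+c^4/4}+c^2/2}$, behaves like $\xi^2$ on low frequencies ($|\xi|\lesssim c$) and like $c|\xi|$ on high frequencies ($|\xi|\gtrsim c$), which matches exactly the orders of the two pure-power multipliers $|\partial_x|^2$ and $|\partial_x|$ that the two instances of standard GN require.

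First I would split $|u|^{p+1}\leq 2^p\bigl(|u_{\leq}|^{p+1}+|u_{>}|^{p+1}\bigr)$ where $u_{\leq}=P_{\leq c\delta}u$ and $u_{>}=P_{>c\delta}u$. For the low-frequency part, I would apply \eqref{standard GN inequality} with $s=1$ to $u_{\leq}$ to obtain $\|u_{\leq}\|_{L^{p+1}}^{p+1}\leq C_{1,p+1}\|u\|_{L^2}^{(p+3)/2}\|\partial_x u_{\leq}\|_{L^2}^{(p-1)/2}$, and then use the pointwise bound $\xi^2\leq 2\sigma_c(\xi)$ on $\{|\xi|\leq c\delta\}\subset\{|\xi|\leq c\}$, which follows from writing $\sigma_c(\xi)=c^2\xi^2/(\sqrt{c^2\xi^2+c^4/4}+c^2/2)$ and bounding the denominator above by (a constant multiple of) $c^2$. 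This converts $\|\partial_x u_{\leq}\|_{L^2}$ into $\sqrt{2}\,\|\sqrt{\mathcal{H}_c}u_{\leq}\|_{L^2}$.

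For the high-frequency part, I would apply \eqref{standard GN inequality} with $s=1/2$ to $u_{>}$ to obtain $\|u_{>}\|_{L^{p+1}}^{p+1}\leq C_{1/2,p+1}\|u\|_{L^2}^{2}\||\partial_x|^{1/2}u_{>}\|_{L^2}^{p-1}$. The key symbol estimate I need is $|\xi|\leq \tfrac{2}{c\delta}\sigma_c(\xi)$ for $|\xi|>c\delta$, which I would obtain by distinguishing $c\delta<|\xi|<c$ (where $\sigma_c(\xi)\geq \xi^2/2$ from the low-frequency bound, hence $|\xi|/\sqrt{c\delta}\leq \sqrt{2\sigma_c(\xi)}/\sqrt{c\delta}$) and $|\xi|\geq c$ (where one checks $\sqrt{c^2\xi^2+c^4/4}+c^2/2\leq 2c|\xi|$ and so $\sigma_c(\xi)\geq c|\xi|/2$). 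Then integrating against $|\hat{u}_{>}|^2$ yields $\||\partial_x|^{1/2}u_{>}\|_{L^2}^{p-1}\leq (2/(c\delta))^{(p-1)/2}\|\sqrt{\mathcal{H}_c}u_{>}\|_{L^2}^{p-1}$.

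Combining the two pieces, using $\|u_{\leq}\|_{L^2},\|u_{>}\|_{L^2}\leq\|u\|_{L^2}$ and tracking the numerical constants gives a factor of $2^{p}\cdot 2^{(p-1)/2}=2^{(3p-1)/2}$ on the high-frequency term and the smaller factor $2^{p}\cdot 2^{(p-1)/4}=2^{(5p-1)/4}\leq 2^{(3p-1)/2}$ on the low-frequency term, so bounding both by $C_{GN}=2^{(3p-1)/2}\max\{C_{1,p+1},C_{1/2,p+1}\}$ is clean. I do not anticipate a real obstacle; the only point requiring care is the symbol comparison $|\xi|\leq \tfrac{2}{c\delta}\sigma_c(\xi)$ for $|\xi|>c\delta$, since a naive attempt (trying to prove $c\delta\,|\xi|\leq\sigma_c(\xi)$ directly by squaring) fails in a small transition window just above $c\delta$. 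Splitting this step at $|\xi|=c$ and exploiting the trivial inequality $|\xi|^{1/2}\leq |\xi|/\sqrt{c\delta}$ on $\{|\xi|>c\delta\}$ circumvents that issue.
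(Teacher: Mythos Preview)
Your proposal is correct and follows essentially the same argument as the paper: split $u=P_{\leq c\delta}u+P_{>c\delta}u$, apply the standard Gagliardo--Nirenberg inequality with $s=1$ on the low piece and $s=1/2$ on the high piece, then convert $|\partial_x|$ and $|\partial_x|^{1/2}$ to $\sqrt{\mathcal{H}_c}$ via the symbol bounds $|\xi|^2\le 2\sigma_c(\xi)$ for $|\xi|\le c\delta$ and $|\xi|\le \tfrac{2}{c\delta}\sigma_c(\xi)$ for $|\xi|>c\delta$. The only cosmetic difference is that the paper isolates these symbol estimates as a separate lemma (Lemma~\ref{symbol}), proved uniformly via the auxiliary function $f(t)=\sqrt{1+t}-1$, whereas you derive them inline by splitting the high-frequency range further at $|\xi|=c$; both yield the same constants and the same final inequality.
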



The following bounds for the symbol are useful. 

\begin{lemma}[Symbol of the pseudo-relativistic operator]\label{symbol}
For $\delta\in(0,1]$, we have
$$\sqrt{c^2|\xi|^2+\tfrac{c^4}{4}}-\tfrac{c^2}{2}\geq\left\{\begin{aligned}
&\tfrac{1}{2}|\xi|^2&&\textup{if }|\xi|\leq c\delta,\\
&\tfrac{c\delta}{2}|\xi|&&\textup{if }|\xi|\geq c\delta.
\end{aligned}\right.$$
and
$$\sqrt{c^2|\xi|^2+\tfrac{c^4}{4}}-\tfrac{c^2}{2}\leq |\xi|^2\quad \textup{for all }\xi.$$
\end{lemma}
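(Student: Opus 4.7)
The plan is to rationalize the radical and reduce every claim to an elementary pointwise inequality. Write
$$f(\xi):=\sqrt{c^2\xi^2+\tfrac{c^4}{4}}-\tfrac{c^2}{2}=\frac{c^2\xi^2}{\sqrt{c^2\xi^2+\tfrac{c^4}{4}}+\tfrac{c^2}{2}},$$
so every bound on $f$ becomes a matching bound on the denominator. The upper bound $f(\xi)\leq|\xi|^2$ is then immediate: the denominator is at least $\tfrac{c^2}{2}+\tfrac{c^2}{2}=c^2$, using only $\sqrt{c^2\xi^2+c^4/4}\geq c^2/2$.

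For the lower bound on the low-frequency region $|\xi|\leq c\delta$, I would use the hypothesis $\delta\leq 1$ to conclude $|\xi|\leq c$, hence $c^2\xi^2+c^4/4\leq \tfrac{5}{4}c^4$ and $\sqrt{c^2\xi^2+c^4/4}\leq \tfrac{\sqrt 5}{2}c^2$. The denominator is then at most $\tfrac{\sqrt 5+1}{2}c^2$, giving $f(\xi)\geq \tfrac{2}{\sqrt 5+1}\xi^2=\tfrac{\sqrt 5-1}{2}\xi^2\geq \tfrac{1}{2}\xi^2$. For the lower bound on the high-frequency region $|\xi|\geq c\delta$, I would exploit the subadditivity $\sqrt{a+b}\leq\sqrt a+\sqrt b$ to get $\sqrt{c^2\xi^2+c^4/4}\leq c|\xi|+\tfrac{c^2}{2}$, so the denominator is at most $c|\xi|+c^2$ and
$$f(\xi)\geq \frac{c|\xi|^2}{|\xi|+c}.$$
The target $f(\xi)\geq \tfrac{c\delta}{2}|\xi|$ then reduces to $(2-\delta)|\xi|\geq \delta c$, which holds since $\delta\leq 1$ gives $2-\delta\geq 1$ and by hypothesis $|\xi|\geq c\delta$.

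No real obstacle is expected; the proof is bookkeeping with the rationalized form. The only subtlety worth flagging is the calibration in the high-frequency case: the linear bound $\tfrac{c\delta}{2}|\xi|$ (rather than, say, $\tfrac{c}{2}|\xi|$) is chosen precisely so that the hypothesis $|\xi|\geq c\delta$ suffices under $\delta\in(0,1]$, and this is where the restriction $\delta\leq 1$ enters in an essential way.
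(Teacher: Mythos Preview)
Your proof is correct. The paper takes a slightly different but equally elementary route: it substitutes $t=4|\xi|^2/c^2$ so that the symbol becomes $\tfrac{c^2}{2}(\sqrt{1+t}-1)$, then uses $\sqrt{1+t}-1\leq t/2$ for the upper bound, the mean-value theorem on $[0,3\delta^2]$ for the low-frequency lower bound, and monotonicity of $\sqrt{t}/(\sqrt{t+1}+1)$ for the high-frequency lower bound. Your rationalize-and-bound-the-denominator approach avoids both the MVT and the monotonicity step, replacing them with a direct estimate and the subadditivity $\sqrt{a+b}\leq\sqrt a+\sqrt b$; as a bonus it yields the sharper constant $\tfrac{\sqrt5-1}{2}$ in the low-frequency regime. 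Both arguments are one-line computations once set up, and neither gives any advantage for the applications later in the paper.
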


\begin{proof}
Because $\sqrt{c^2|\xi|^2+\frac{c^4}{4}}-\frac{c^2}{2}=\frac{c^2}{2}f(\frac{4|\xi|^2}{c^2})$, where $f(t)=\sqrt{1+t}-1$, it suffices to show the appropriate bounds for $f(t)$. Evidently, $f(t)\leq \frac{t}{2}$. Moreover, if $0\leq t\leq 3\delta^2$, then by the mean-value theorem, there exists $t_*\in[0,3\delta^2]$ such that $f(t)=\frac{1}{2\sqrt{1+t_*}}t\ge \frac{1}{4} t$. On the other hand, if $t\geq3\delta^2$, then $f(t)=\frac{t}{\sqrt{t+1}+1}\geq\frac{\delta}{2}\sqrt{t}$ because $\frac{\sqrt{t}}{\sqrt{t+1}+1}$ increases. 
\end{proof}

\begin{proof}[Proof of Proposition \ref{GN type inequality}]
Decomposing the high and low frequencies and then applying the Gagliardo-Nirenberg inequality, we obtain 
$$\begin{aligned}
\|u\|_{L^{p+1}(\mathbb{R})}^{p+1}&\leq 2^p\left\{\|P_{\leq c\delta}u\|_{L^{p+1}(\mathbb{R})}^{p+1}+\|P_{> c\delta}u\|_{L^{p+1}(\mathbb{R})}^{p+1}\right\}\\
&\leq 2^p\left\{ C_{1,p+1}\|u\|_{L^2(\mathbb{R})}^{\frac{p+3}{2}}\||\partial_x| P_{\leq c\delta}u\|_{L^2(\mathbb{R})}^{\frac{p-1}{2}}+C_{\frac{1}{2},p+1}\|u\|_{L^2(\mathbb{R})}^2\||\partial_x|^{1/2}P_{>c\delta}u\|_{L^2(\mathbb{R})}^{p-1}\right\}
\end{aligned}$$
Hence, using the lower bound in Lemma \ref{symbol}, we complete the proof.
\end{proof}

Proposition \ref{GN type inequality} and the Cauchy-Schwarz inequality imply that $\mathcal{J}_c(M)$ has a lower bound.
\begin{corollary} Let $p\in[3,5)$. Then, $\mathcal{J}_c(M)>-\infty$ for all $c\ge 1$.
\end{corollary}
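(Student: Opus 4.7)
The plan is to use the modified Gagliardo--Nirenberg inequality (Proposition \ref{GN type inequality}) to bound $\|u\|_{L^{p+1}}^{p+1}$ uniformly on the admissible set, which will force $\mathcal{E}_c(u)$ to admit a finite lower bound.

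First, I would fix an admissible $u$, apply Proposition \ref{GN type inequality} with $\delta = 1$, and substitute the mass constraint $\mathcal{M}(u) = M$ to obtain
$$\|u\|_{L^{p+1}}^{p+1} \leq C_{GN}\Bigl\{M^{\frac{p+3}{4}}\|\sqrt{\mathcal{H}_c}P_{\leq c} u\|_{L^2}^{\frac{p-1}{2}} + c^{-\frac{p-1}{2}} M \|\sqrt{\mathcal{H}_c}P_{>c} u\|_{L^2}^{p-1}\Bigr\}.$$
Next, I would invoke Plancherel---an orthogonality/Cauchy--Schwarz statement for the low- and high-frequency projections---to bound each projection norm by $\|\sqrt{\mathcal{H}_c}u\|_{L^2}$, which in turn is controlled by the kinetic constraint $\|\sqrt{\mathcal{H}_c}u\|_{L^2} \leq c^{\frac{p+3}{2(p-1)}}$. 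A direct computation of exponents (the first piece contributes $c^{\frac{p+3}{2(p-1)}\cdot\frac{p-1}{2}} = c^{\frac{p+3}{4}}$, the second $c^{-\frac{p-1}{2}}\cdot c^{\frac{(p+3)(p-1)}{2(p-1)}} = c^{2}$) then yields
$$\|u\|_{L^{p+1}}^{p+1} \leq C_{GN}\Bigl\{M^{\frac{p+3}{4}} c^{\frac{p+3}{4}} + M c^{2}\Bigr\},$$
a finite quantity depending only on $c$, $M$, and $p$. Since the kinetic part of $\mathcal{E}_c(u)$ is non-negative, combining the pieces gives
$$\mathcal{E}_c(u) \geq -\tfrac{C_{GN}}{p+1}\bigl\{M^{\frac{p+3}{4}} c^{\frac{p+3}{4}} + M c^{2}\bigr\} > -\infty,$$
which is the corollary.

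There is no essential difficulty here: the modified Gagliardo--Nirenberg inequality was designed precisely so that both frequency pieces of $\|u\|_{L^{p+1}}^{p+1}$ are controlled by $\|\sqrt{\mathcal{H}_c}u\|_{L^2}$, and the kinetic constraint caps that quantity. If one wants a sharper, partially $c$-independent bound, an alternative is to apply Young's inequality with conjugate exponents $\frac{4}{p-1}$ and $\frac{4}{5-p}$ (valid since $p<5$, and reducing to the Cauchy--Schwarz inequality at $p=3$) to absorb the low-frequency term into $\frac{1}{2}\|\sqrt{\mathcal{H}_c}u\|_{L^2}^2$; but this refinement is unnecessary for the stated conclusion.
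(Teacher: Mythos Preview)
Your argument is correct and follows exactly the approach the paper indicates: apply the modified Gagliardo--Nirenberg inequality (Proposition~\ref{GN type inequality}), bound the frequency-projected kinetic norms by the full $\|\sqrt{\mathcal{H}_c}u\|_{L^2}$ via Plancherel/Cauchy--Schwarz, and then use the kinetic constraint. The paper's own proof is the single sentence ``Proposition~\ref{GN type inequality} and the Cauchy--Schwarz inequality imply that $\mathcal{J}_c(M)$ has a lower bound,'' and your write-up is precisely the computation behind that sentence.
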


Throughout this paper, we let 
\begin{equation}\label{alpha}
\boxed{\quad\alpha=\frac{4C_{GN}}{p+1}=\frac{ {2}^\frac{3(p+1)}{2}}{p+1}\max\Big\{C_{1,p+1}, C_{\frac{1}{2},p+1}\Big\},\quad}
\end{equation}
where $C_{s,p+1}$ is the sharp constant for the Gagliardo-Nirenberg inequality \eqref{standard GN inequality}.

Another important consequence of Proposition \ref{GN type inequality} is the separation of the negative energy function set for the pseudo-relativistic NLS. 

\begin{corollary}[Separation of the negative energy function space]\label{separation of initial data}
Let $p\in[3,5)$ and $c\geq \max\{(\alpha M)^\frac{p-1}{5-p}, (\alpha^\frac{4}{p+3}M)^\frac{p-1}{5-p}\}$, where $\alpha$ is given by \eqref{alpha}. Suppose that $\mathcal{M}(u)=M$ and $\mathcal{E}_c(u)<0$. Then, either $\|\sqrt{\mathcal{H}_c} u\|_{L^2(\mathbb{R})}> c^{\frac{p+3}{2(p-1)}}$ or $\|\sqrt{\mathcal{H}_c} u\|_{L^2(\mathbb{R})}\leq \alpha^{\frac{2}{5-p}}M^{\frac{p+3}{2(5-p)}}$ holds.
\end{corollary}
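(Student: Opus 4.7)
The plan is to argue by contradiction. Suppose $\mathcal{M}(u)=M$, $\mathcal{E}_c(u)<0$, and yet the quantity $A:=\|\sqrt{\mathcal{H}_c}u\|_{L^2(\mathbb{R})}$ lies in the gap
\[
\alpha^{2/(5-p)}M^{(p+3)/(2(5-p))} \;<\; A \;\leq\; c^{(p+3)/(2(p-1))}.
\]
First I would apply Proposition \ref{GN type inequality} with $\delta=1$; this is the natural cutoff, since by Lemma \ref{symbol} the symbol of $\mathcal{H}_c$ transitions from quadratic to linear growth at $|\xi|\sim c$. Because $P_{\leq c}$ and $P_{>c}$ are orthogonal projections commuting with $\sqrt{\mathcal{H}_c}$, each of $\|\sqrt{\mathcal{H}_c}P_{\leq c}u\|_{L^2}$ and $\|\sqrt{\mathcal{H}_c}P_{>c}u\|_{L^2}$ is bounded by $A$. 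Combining this with $\|u\|_{L^2}^2=M$, the negative-energy bound $\|u\|_{L^{p+1}}^{p+1}>\tfrac{p+1}{2}A^2$, and the definition $\alpha=4C_{GN}/(p+1)$ from \eqref{alpha}, one gets
\[
A^2 \;<\; \frac{\alpha}{2}\Bigl(M^{(p+3)/4}A^{(p-1)/2} \,+\, c^{-(p-1)/2}MA^{p-1}\Bigr).
\]

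Next I would split into two cases depending on which term on the right dominates. In the low-frequency-dominated case $c^{-(p-1)/2}MA^{p-1}\leq M^{(p+3)/4}A^{(p-1)/2}$, the inequality reduces to $A^{(5-p)/2}<\alpha M^{(p+3)/4}$, directly contradicting the lower bound on $A$. In the high-frequency-dominated case, it becomes $c^{(p-1)/2}A^{3-p}<\alpha M$. Since $3-p\leq 0$ for $p\in[3,5)$, the assumed upper bound $A\leq c^{(p+3)/(2(p-1))}$ forces $A^{3-p}\geq c^{(p+3)(3-p)/(2(p-1))}$; a routine exponent identity, namely
\[
\frac{p-1}{2} + \frac{(p+3)(3-p)}{2(p-1)} \;=\; \frac{5-p}{p-1},
\]
then yields $c^{(5-p)/(p-1)}<\alpha M$, contradicting the hypothesis $c\geq (\alpha M)^{(p-1)/(5-p)}$.

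The main obstacle is bookkeeping rather than conceptual: one must pick the cutoff scale $\delta=1$ in the modified Gagliardo--Nirenberg inequality so that the two terms have the correct homogeneity to match the two thresholds appearing in the statement, and then track exponents carefully in both cases. The second hypothesis $c\geq(\alpha^{4/(p+3)}M)^{(p-1)/(5-p)}$ is not invoked in the contradiction argument itself; it is equivalent to $\alpha^{2/(5-p)}M^{(p+3)/(2(5-p))}\leq c^{(p+3)/(2(p-1))}$, so its role is merely to guarantee that the separating interval in the conclusion is genuinely non-degenerate.
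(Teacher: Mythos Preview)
Your proof is correct and follows essentially the same route as the paper: both apply Proposition~\ref{GN type inequality} with $\delta=1$, bound the two frequency pieces by $A=\|\sqrt{\mathcal{H}_c}u\|_{L^2}$, and exploit the constraint $A\le c^{(p+3)/(2(p-1))}$ together with $c\ge(\alpha M)^{(p-1)/(5-p)}$. The only organizational difference is that the paper absorbs the high-frequency term $c^{-(p-1)/2}MA^{p-1}$ into the quadratic term $A^2$ (via $A^{p-3}\le c^{(p+3)(p-3)/(2(p-1))}$) and then factors, whereas you split into two cases according to which term dominates; the exponent identity you verify is exactly the one underlying the paper's absorption step, so the two arguments are algebraically equivalent.
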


\begin{remark}\label{remark: sepration}
In other words, if $\mathcal{M}(u)=M$, $\mathcal{E}_c(u)<0$ and $\|\sqrt{\mathcal{H}_c} u\|_{L^2(\mathbb{R})}\leq c^{\frac{p+3}{2(p-1)}}$, then $\|\sqrt{\mathcal{H}_c} u\|_{L^2(\mathbb{R})}\leq \alpha^{\frac{2}{5-p}}M^{\frac{p+3}{2(5-p)}}$.
\end{remark}

\begin{proof}[Proof of Corollary \ref{separation of initial data}]
Note that, because $c\geq (\alpha^\frac{4}{p+3}M)^\frac{p-1}{5-p}$, we have $c^{\frac{p+3}{2(p-1)}}\ge \alpha^{\frac{2}{5-p}}M^{\frac{p+3}{2(5-p)}}$.
By Proposition \ref{GN type inequality} with $\delta=1$, we write 
$$\mathcal{E}_c(u)\geq\frac{1}{2}\|\sqrt{\mathcal{H}_c} u\|_{L^2(\mathbb{R})}^2 -\frac{\alpha}{4}\left\{\|u\|_{L^2(\mathbb{R})}^{\frac{p+3}{2}}\|\sqrt{\mathcal{H}_c}u\|_{L^2(\mathbb{R})}^{\frac{p-1}{2}}+c^{-\frac{p-1}{2}}\|u\|_{L^2(\mathbb{R})}^2\|\sqrt{\mathcal{H}_c}u\|_{L^2(\mathbb{R})}^{p-1}\right\}.$$
If $\|\sqrt{\mathcal{H}_c} u\|_{L^2(\mathbb{R})}\leq c^{\frac{p+3}{2(p-1)}}$, then by the assumptions, it follows that 
\begin{equation}\label{ene1}
\begin{aligned}
\mathcal{E}_c(u)&\geq\frac{1}{2}\|\sqrt{\mathcal{H}_c} u\|_{L^2(\mathbb{R})}^2 -\frac{\alpha}{4}\left\{M^{\frac{p+3}{4}}\|\sqrt{\mathcal{H}_c}u\|_{L^2(\mathbb{R})}^{\frac{p-1}{2}}+M c^{-\frac{5-p}{p-1}}\|\sqrt{\mathcal{H}_c}u\|_{L^2(\mathbb{R})}^2\right\}\\
&=\big(\tfrac{1}{2}-\tfrac{\alpha}{4}c^{-\frac{5-p}{p-1}}M\big)\|\sqrt{\mathcal{H}_c}u\|_{L^2(\mathbb{R})}^{\frac{p-1}{2}}\left\{\|\sqrt{\mathcal{H}_c}u\|_{L^2(\mathbb{R})}^{\frac{5-p}{2}}-\tfrac{\alpha M^{\frac{p+3}{4}}}{4\left(\frac{1}{2}-\frac{\alpha}{4}c^{-\frac{5-p}{p-1}} M\right)}\right\}\\
&\geq\frac{1}{4}\|\sqrt{\mathcal{H}_c}u\|_{L^2(\mathbb{R})}^{\frac{p-1}{2}}\left\{\|\sqrt{\mathcal{H}_c}u\|_{L^2(\mathbb{R})}^{\frac{5-p}{2}}-\alpha M^{\frac{p+3}{4}}\right\},
\end{aligned}
\end{equation}
where $c\geq (\alpha M)^\frac{p-1}{5-p}$ in the previous step. Hence, because $\mathcal{E}_c(u)<0$, we conclude that $\|\sqrt{\mathcal{H}_c}u\|_{L^2(\mathbb{R})}\leq\alpha^{\frac{2}{5-p}}M^{\frac{p+3}{2(5-p)}}$.
\end{proof}

\subsection{Well-posedness of the pseudo-relativistic NLS}
Even in the mass-(super)critical case, if $c\geq 1$ is sufficiently large, a negative energy solution exists globally in time, and its kinetic energy is not extremely large. 

\begin{proposition}[Global well-posedness]\label{GWP}
Let $p\in[3,5)$ and 
$$c\geq \max\{(\alpha M)^\frac{p-1}{5-p}, (\alpha^\frac{4}{p+3}M)^\frac{p-1}{5-p}\},$$
where $\alpha$ is given by \eqref{alpha}. Suppose that $\mathcal{M}(u_0)=M$, $\mathcal{E}_c(u_0)<0$, and $\|\sqrt{\mathcal{H}_c} u_0\|_{L^2(\mathbb{R})}\leq c^{\frac{p+3}{2(p-1)}}$, and let $u(t)$ be the local solution to the pseudo-relativistic NLS \eqref{pNLS0} with initial data $u_0$. Then, $u(t)$ exists globally in time, and 
$$\sup_{t\in\mathbb{R}}\|\sqrt{\mathcal{H}_c} u(t)\|_{L^2(\mathbb{R})}\leq \alpha^{\frac{2}{5-p}}M^{\frac{p+3}{2(5-p)}}.$$
\end{proposition}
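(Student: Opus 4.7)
The plan is to combine the $H^{1/2}$ local well-posedness theory cited at the start of Section~1 with mass/energy conservation and Corollary~\ref{separation of initial data} in a standard continuity (bootstrap) argument. Let $u\in C(I_{\max};H^{1/2}(\R))$ denote the maximal local solution. Mass and energy conservation give $\mathcal{M}(u(t))=M$ and $\mathcal{E}_c(u(t))=\mathcal{E}_c(u_0)<0$ for every $t\in I_{\max}$, and since (for fixed $c$) $\|\sqrt{\mathcal{H}_c}\,\cdot\,\|_{L^2}$ controls and is controlled by the $H^{1/2}$ norm, the map $t\mapsto\|\sqrt{\mathcal{H}_c}u(t)\|_{L^2}$ is continuous on $I_{\max}$.

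Applying Remark~\ref{remark: sepration} to $u_0$ at $t=0$ already gives
\[
\|\sqrt{\mathcal{H}_c}u_0\|_{L^2(\R)}\leq\alpha^{2/(5-p)}M^{(p+3)/(2(5-p))},
\]
i.e.\ $u_0$ lives in the ``small'' component of the negative-energy level set. The bootstrap step is that Corollary~\ref{separation of initial data} rules out kinetic energies lying in the interval $(\alpha^{2/(5-p)}M^{(p+3)/(2(5-p))},\,c^{(p+3)/(2(p-1))}]$ along the conserved set $\{\mathcal{M}=M,\ \mathcal{E}_c<0\}$. Hence the continuous function $t\mapsto\|\sqrt{\mathcal{H}_c}u(t)\|_{L^2}$ cannot, by the intermediate value theorem, leave the interval $[0,\alpha^{2/(5-p)}M^{(p+3)/(2(5-p))}]$, which is exactly the claimed a priori bound.

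Finally, because the local existence time in the $H^{1/2}$-subcritical regime $p<5$ depends only on the $H^{1/2}$-norm of the data, this time-independent bound converts into uniform control $\sup_{t\in I_{\max}}\|u(t)\|_{H^{1/2}}<\infty$, ruling out the standard blow-up alternative and yielding $I_{\max}=\R$ by iterating the local theory from any fixed time.

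The whole argument is quite soft and the only place that requires any care is the continuity/connectedness step, which relies on having a genuine separation between the two components of the negative-energy set. This is precisely the content of the hypothesis $c\geq(\alpha^{4/(p+3)}M)^{(p-1)/(5-p)}$, which produces $c^{(p+3)/(2(p-1))}\geq\alpha^{2/(5-p)}M^{(p+3)/(2(5-p))}$ and thus makes the ``forbidden gap'' available for the intermediate value argument. Apart from this, all remaining steps (continuity in $H^{1/2}$, conservation of mass and energy, and the $H^{1/2}$-blow-up alternative) are standard consequences of the local theory quoted in the introduction.
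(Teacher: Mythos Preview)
Your proof is correct and follows essentially the same approach as the paper's: apply Remark~\ref{remark: sepration} at $t=0$, then use continuity of $t\mapsto\|\sqrt{\mathcal{H}_c}u(t)\|_{L^2}$ together with the forbidden interval from Corollary~\ref{separation of initial data} to bootstrap the small-kinetic-energy bound along the flow, and conclude global existence from the resulting uniform $H^{1/2}$ control. The paper's proof is more terse (it simply says ``by continuity'' and ``repeating this analysis''), while you spell out the intermediate value theorem step, the role of the hypothesis $c\geq(\alpha^{4/(p+3)}M)^{(p-1)/(5-p)}$ in producing a genuine gap, and the blow-up alternative, but there is no substantive difference in strategy.
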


\begin{proof}
Because $\|\sqrt{\mathcal{H}_c} u_0\|_{L^2(\mathbb{R})}\leq c^{\frac{p+3}{2(p-1)}}$ and $u_0$ has a negative energy, it follows from Remark \ref{remark: sepration} that $\|\sqrt{\mathcal{H}_c} u_0\|_{L^2(\mathbb{R})}\leq \alpha^{\frac{2}{5-p}}M^{\frac{p+3}{2(5-p)}}$. Hence, by continuity, $\|\sqrt{\mathcal{H}_c} u(t)\|_{L^2(\mathbb{R})}\leq c^{\frac{p+3}{2(p-1)}}$ for all sufficiently small $|t|$, but from Remark \ref{remark: sepration}, the solution obeys $\|\sqrt{\mathcal{H}_c} u(t)\|_{L^2(\mathbb{R})}\leq \alpha^{\frac{2}{5-p}}M^{\frac{p+3}{2(5-p)}}$ for all small $|t|$. Repeating this analysis, we conclude that the same bound holds for all $t$.
\end{proof}

\subsection{Non-relativistic minimization problem}\label{sec: NR minimization}
We summarize the known results for the limit case \eqref{non-relativistic variational problem}. The variational problem \eqref{non-relativistic variational problem} attains a positive minimizer $Q_\infty$. This minimizer is unique up to translation and phase shift, and it solves the Euler-Lagrange equation $-\partial_x^2 Q_\infty-Q_\infty^p=-\mu_\infty Q_\infty$ for some Lagrange multiplier $\mu_\infty>0$. Moreover, it is symmetric, smooth, and exponentially decreasing (see \cite{Caz, Lions}). Using the following identities, 
$$0=\|\partial_x Q_\infty\|_{L^2(\mathbb{R})}^2-\|Q_\infty\|_{L^{p+1}(\mathbb{R})}^{p+1}+\mu_\infty \|Q_\infty\|_{L^2(\mathbb{R})}^2$$
and
$$\begin{aligned}
0&=2\int_{\mathbb{R}} x\partial_x Q_\infty (-\partial_x^2 Q_\infty-Q_\infty^p+\mu_\infty Q_\infty)dx\\
&=\|\partial_x Q_\infty\|_{L^2(\mathbb{R})}^2+\frac{2}{p+1}\|Q_\infty\|_{L^{p+1}(\mathbb{R})}^{p+1}-\mu_\infty \|Q_\infty\|_{L^2(\mathbb{R})}^2,
\end{aligned}$$
we find that 
\begin{equation}\label{Pohozaev identities}
\|\partial_x Q_\infty\|_{L^2(\mathbb{R})}^2=\frac{p-1}{p+3}\mu_\infty M,\quad \|Q_\infty\|_{L^{p+1}(\mathbb{R})}^{p+1}=\frac{2(p+1)}{p+3} \mu_\infty M,
\end{equation}
where $M=\mathcal{M}(Q_\infty)$.
In \cite{We}, it was  proved that $Q_\infty$ provides a sharp constant for the Gagliardo-Nirenberg inequality (see \eqref{standard GN inequality})
$$C_{1,p+1}=\frac{\|Q_\infty\|_{L^{p+1}(\mathbb{R})}^{p+1}}{\|Q_\infty\|_{L^2(\mathbb{R})}^\frac{p+3}{2}\|\partial_x Q_\infty\|_{L^2(\mathbb{R})}^\frac{p-1}{2}}=\frac{2(p+1)}{(p+3)^{\frac{5-p}{4}}(p-1)^{\frac{p-1}{4}}}\frac{\mu_\infty^\frac{5-p}{4}}{M^\frac{p-1}{2}},$$
where \eqref{Pohozaev identities} are used in the final step. Then, inserting $\mu_\infty=(p+3)[\frac{(p-1)^{\frac{p-1}{4}}C_{1,p+1}}{2(p+1)}]^\frac{4}{5-p} M^{\frac{2(p-1)}{5-p}}$ into \eqref{Pohozaev identities}, we obtain 
\begin{equation}\label{Pohozaev identities'}
\|\partial_x Q_\infty\|_{L^2(\mathbb{R})}^2=\left[\tfrac{(p-1)C_{1,p+1}}{2(p+1)} \right]^\frac{4}{5-p}M^\frac{p+3}{5-p}
\end{equation}
and
\begin{equation}\label{non-relativistic minimum energy}
\mathcal{J}_\infty(M)= -\frac{5-p}{2(p+3)}\mu_\infty M=-\frac{5-p}{2}\left[\tfrac{(p-1)^{\frac{p-1}{4}}C_{1,p+1}}{2(p+1)}\right]^\frac{4}{5-p}M^\frac{p+3}{5-p}<0.
\end{equation}

\section{Existence of a minimizer: Proof of Theorem \ref{cptness}}\label{psm}

We consider the variational problem $\mathcal{J}_c(M)$ (see \eqref{pNLS minimization}) and prove the existence of a minimizer and its basic properties (Theorem \ref{cptness}). As a first step, we show that $\mathcal{J}_c(M)$ has a negative upper bound.
\begin{lemma}[Comparison between $\mathcal{J}_c(M)$ and $\mathcal{J}_\infty(M)$]\label{minimum energy comparison}  Assume that $p\in [3,5)$ and $c\ge (\alpha^\frac{4}{p+3}M)^\frac{p-1}{5-p}$. Then, $Q_\infty$ is admissible for $\mathcal{J}_c(M)$, and 
$$\mathcal{J}_c(M)\leq \mathcal{E}_c(Q_\infty)\leq \mathcal{E}_\infty(Q_\infty)=\mathcal{J}_\infty(M)<0.$$
\end{lemma}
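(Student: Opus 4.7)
The plan is to verify the admissibility of $Q_\infty$ for $\mathcal{J}_c(M)$ and then chain three simple inequalities: $\mathcal{J}_c(M) \leq \mathcal{E}_c(Q_\infty)$ from the definition of infimum, $\mathcal{E}_c(Q_\infty) \leq \mathcal{E}_\infty(Q_\infty)$ from the pointwise symbol bound, and $\mathcal{E}_\infty(Q_\infty) = \mathcal{J}_\infty(M) < 0$ from Section~\ref{sec: NR minimization}. The substantive work is only the admissibility check; everything else is a one-line comparison once the constraint is known to hold.

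First, to check admissibility, I would use the upper symbol bound $\sqrt{c^2|\xi|^2 + \tfrac{c^4}{4}} - \tfrac{c^2}{2} \leq |\xi|^2$ from Lemma \ref{symbol} (via Plancherel) to get
$$\|\sqrt{\mathcal{H}_c}\, Q_\infty\|_{L^2(\R)}^2 \leq \|\partial_x Q_\infty\|_{L^2(\R)}^2,$$
and then combine this with the explicit Pohozaev formula \eqref{Pohozaev identities'}, giving
$$\|\sqrt{\mathcal{H}_c}\, Q_\infty\|_{L^2(\R)}^2 \leq \left[\tfrac{(p-1)C_{1,p+1}}{2(p+1)}\right]^{\frac{4}{5-p}} M^{\frac{p+3}{5-p}}.$$
By the definition \eqref{alpha} of $\alpha$, one has $C_{1,p+1} \leq \frac{(p+1)\alpha}{2^{3(p+1)/2}}$, and a quick numerical comparison (for $p \in [3,5)$ one has $p-1 < 2^{(3p+5)/2}$) yields $\tfrac{(p-1)C_{1,p+1}}{2(p+1)} \leq \alpha$. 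Hence the right-hand side is bounded by $\alpha^{\frac{4}{5-p}} M^{\frac{p+3}{5-p}}$, and the hypothesis $c \geq (\alpha^{4/(p+3)}M)^{(p-1)/(5-p)}$ is precisely what is needed to guarantee
$$\alpha^{\frac{4}{5-p}} M^{\frac{p+3}{5-p}} \leq c^{\frac{p+3}{p-1}},$$
so that $\|\sqrt{\mathcal{H}_c}\, Q_\infty\|_{L^2(\R)} \leq c^{\frac{p+3}{2(p-1)}}$. Since $\mathcal{M}(Q_\infty) = M$ by the choice of $Q_\infty$, this establishes that $Q_\infty$ belongs to the admissible set for $\mathcal{J}_c(M)$, so $\mathcal{J}_c(M) \leq \mathcal{E}_c(Q_\infty)$.

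Next, for the middle inequality, the nonlinear terms in $\mathcal{E}_c$ and $\mathcal{E}_\infty$ are identical, so the comparison reduces to the kinetic parts, and applying the same symbol bound a second time gives $\mathcal{E}_c(Q_\infty) \leq \mathcal{E}_\infty(Q_\infty)$. Finally, $\mathcal{E}_\infty(Q_\infty) = \mathcal{J}_\infty(M)$ holds because $Q_\infty$ is by definition a minimizer for \eqref{non-relativistic variational problem}, and $\mathcal{J}_\infty(M) < 0$ is precisely formula \eqref{non-relativistic minimum energy}. The main (very mild) obstacle is the constant bookkeeping in the admissibility step — making sure the constant $\alpha$ was set up in \eqref{alpha} with a factor large enough to absorb the Pohozaev constant — but this is exactly why the factor $2^{3(p+1)/2}$ appears in the definition of $\alpha$, so the inequality goes through cleanly for every $p \in [3,5)$.
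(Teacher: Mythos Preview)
Your proof is correct and follows essentially the same route as the paper's: both use the symbol upper bound from Lemma~\ref{symbol} to get $\|\sqrt{\mathcal{H}_c}Q_\infty\|_{L^2}\leq\|\partial_x Q_\infty\|_{L^2}$, plug in the Pohozaev value \eqref{Pohozaev identities'}, absorb the constant into $\alpha$ via \eqref{alpha}, and then invoke the hypothesis on $c$ for admissibility; the energy comparison and the appeal to \eqref{non-relativistic minimum energy} are likewise identical. Your write-up simply spells out the constant comparison $\tfrac{(p-1)C_{1,p+1}}{2(p+1)}\leq\alpha$ in more detail than the paper does.
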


\begin{proof}
By \eqref{alpha}, \eqref{Pohozaev identities'}, Lemma \ref{symbol} and  the assumptions $p\in [3,5)$  and $c\ge (\alpha^\frac{4}{p+3}M)^\frac{p-1}{5-p}$,  we obtain 
$$\|\sqrt{\mathcal{H}_c}Q_\infty\|_{L^2(\mathbb{R})}\leq\|\partial_x Q_\infty\|_{L^2(\mathbb{R})}=\left[\tfrac{(p-1)C_{1,p+1}}{2(p+1)} \right]^\frac{2}{5-p}M^\frac{p+3}{2(5-p)}\leq \alpha^\frac{2}{5-p}M^\frac{p+3}{2(5-p)}\leq c^{\frac{p+3}{2(p-1)}}.$$
Moreover, by \eqref{non-relativistic minimum energy} and the fact that $\|\sqrt{\mathcal{H}_c}Q_\infty\|_{L^2(\mathbb{R})}\leq\|\partial_x Q_\infty\|_{L^2(\mathbb{R})}$, we obtain  $\mathcal{E}_c(Q_\infty)\leq \mathcal{E}_\infty(Q_\infty)=\mathcal{J}_\infty(M)<0$. This proves the lemma by the definition of $\mathcal{J}_c(M)$. 
\end{proof}

Next, we observe from Corollary \ref{separation of initial data} that the constraint in $\mathcal{J}_c(M)$ can be refined.

\begin{lemma}[Refined constraint minimization]\label{local min structure}
Assume that $p\in [3,5)$. Then, for any $c\geq \max\{(\alpha M)^\frac{p-1}{5-p}, (\alpha^\frac{4}{p+3}M)^\frac{p-1}{5-p}\}$, we have 
\begin{equation}\label{reduced pNLS minimization}
\mathcal{J}_c(M)=\inf\Big\{\mathcal{E}_c(u): \|\sqrt{\mathcal{H}_c}u\|_{L^2(\mathbb{R})}\leq \alpha^{\frac{2}{5-p}} M^\frac{p+3}{2(5-p)}  \textup{ and }\|u\|_{L^2(\mathbb{R})}^2=M\Big\}.
\end{equation}
\end{lemma}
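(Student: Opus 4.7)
The plan is to establish equality by showing the two infima are taken over nested sets on which the relevant minimizing sequences already lie. Denote by $\tilde{\mathcal{J}}_c(M)$ the right-hand side of \eqref{reduced pNLS minimization}. Since $c\ge (\alpha^{4/(p+3)}M)^{(p-1)/(5-p)}$, raising both sides to the power $(p+3)/(2(p-1))$ yields $c^{(p+3)/(2(p-1))}\ge \alpha^{2/(5-p)}M^{(p+3)/(2(5-p))}$. Consequently, the admissible class in the refined problem is contained in the admissible class for $\mathcal{J}_c(M)$, so that $\mathcal{J}_c(M)\le \tilde{\mathcal{J}}_c(M)$ is immediate.

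For the reverse inequality, I will pass to a minimizing sequence. Pick $(u_n)\subset H^{1/2}(\mathbb R)$ with $\mathcal{M}(u_n)=M$, $\|\sqrt{\mathcal{H}_c}u_n\|_{L^2}\le c^{(p+3)/(2(p-1))}$, and $\mathcal{E}_c(u_n)\to\mathcal{J}_c(M)$. By Lemma \ref{minimum energy comparison}, $\mathcal{J}_c(M)\le \mathcal{J}_\infty(M)<0$, so $\mathcal{E}_c(u_n)<0$ for all sufficiently large $n$. Then the hypotheses of Corollary \ref{separation of initial data} (see Remark \ref{remark: sepration}) are in force for $u_n$, which forces the kinetic bound to collapse to the smaller alternative
\[
\|\sqrt{\mathcal{H}_c}u_n\|_{L^2(\mathbb R)}\le \alpha^{2/(5-p)}M^{(p+3)/(2(5-p))}.
\]
Hence $u_n$ is admissible for the refined problem for all large $n$, giving $\tilde{\mathcal{J}}_c(M)\le \limsup_n \mathcal{E}_c(u_n)=\mathcal{J}_c(M)$.

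Combining both inequalities yields the claimed identity. There is no real obstacle here: all the analytic content — the negative upper bound on $\mathcal{J}_c(M)$ and the dichotomy of kinetic size for negative-energy configurations — has already been established, so the proof amounts to assembling these two facts together with the elementary monotonicity $c^{(p+3)/(2(p-1))}\ge \alpha^{2/(5-p)}M^{(p+3)/(2(5-p))}$ coming from the assumption on $c$.
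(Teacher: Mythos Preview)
Your proof is correct and follows essentially the same approach as the paper: both use Lemma~\ref{minimum energy comparison} to ensure $\mathcal{J}_c(M)<0$ so that any minimizing sequence eventually has negative energy, and then invoke Corollary~\ref{separation of initial data} (Remark~\ref{remark: sepration}) to obtain the refined kinetic bound. Your write-up is slightly more explicit in spelling out the trivial inequality $\mathcal{J}_c(M)\le\tilde{\mathcal{J}}_c(M)$ via the inclusion of admissible sets, which the paper leaves implicit.
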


\begin{proof}
Let $\{u_n\}_{n=1}^\infty$ be a minimizing sequence for $\mathcal{J}_c(M)$. For a sufficiently large $n\geq 1$, by Lemma \ref{minimum energy comparison}, the set of admissible functions is not empty and $\mathcal{E}_c(u_n)<0$. Hence, Corollary \ref{separation of initial data} implies that $\|\sqrt{\mathcal{H}_c} u_n\|_{L^2(\mathbb{R})}\leq \alpha^{\frac{2}{5-p}}M^{\frac{p+3}{2(5-p)}}$ holds.
\end{proof}

Now, we can show that a minimizer exists for \eqref{pNLS minimization}.


\begin{proof}[Proof of Theorem \ref{cptness}]
Let $\{u_n\}_{n=1}^\infty$ be a minimizing sequence for $\mathcal{J}_c(M)$. Then, by Lemma \ref{local min structure}, it is uniformly bounded in $H^{1/2}(\mathbb{R})$, and thus, $u_n\rightharpoonup \tilde{u}$ in $H^{1/2}(\mathbb{R})$ up to a subsequence. However, we have 
\begin{equation}\label{po1}
\liminf_{n\rightarrow \infty}\|u_n\|_{L^{p+1}(\mathbb{R})}>0,
\end{equation}
because, by Lemma \ref{minimum energy comparison}, $0> \mathcal{J}_c(M)=\mathcal{E}_c(u_n)-o_n(1)\geq-\frac{1}{p+1}\|u_n\|_{L^{p+1}(\mathbb{R})}^{p+1}-o_n(1)$. Here, $o_n(1)$ means that $o_n(1)=a_n\rightarrow 0$ as $n\rightarrow \infty$, where $a_n\in \R$. Hence, $\tilde{u}\neq 0$ in $L^{p+1}(\mathbb{R})$. 

We claim that $u_n\to \tilde{u}$ in $L^2(\mathbb{R})$. If the claim is not true, then passing to a subsequence, 
$$\|\tilde{u}\|_{L^2(\mathbb{R})}^2=M',\quad \|u_n-\tilde{u}\|_{L^2(\mathbb{R})}^2\to M-M'\in (0,M).$$
and thus, 
$$\mathcal{E}_c(u_n)=\mathcal{E}_c(\tilde{u})+\mathcal{E}_c(u_n-\tilde{u})+o_n(1)\geq \mathcal{J}_c(M')+\mathcal{J}_c(M-M')-o_n(1).$$
Let $\{v_n\}_{n=1}^\infty$ be a minimizing sequence for $\mathcal{J}_c(M')$. Then, as observed above (see \eqref{po1}), passing to a subsequence, $\underset{n\to\infty}\liminf\|v_n\|_{L^{p+1}(\mathbb{R})}^{p+1}\geq \delta_0>0$ for a small $\delta_0>0$. Furthermore, $\sqrt{\frac{M}{M'}}v_n$ is admissible for $\mathcal{J}_c(M)$, because Lemma \ref{local min structure} implies that $\|\sqrt{\mathcal{H}_c}(\sqrt{\frac{M}{M'}}v_n)\|_{L^2(\mathbb{R})}^2\leq \frac{M}{M'}\alpha^{\frac{4}{5-p}} (M')^{\frac{p+3}{5-p}}\leq \alpha^{\frac{4}{5-p}}M^{\frac{p+3}{5-p}}$ and $\|\sqrt{\frac{M}{M'}}v_n\|_{L^2(\mathbb{R})}^2=M$. Therefore, it follows that
$$\begin{aligned}
\mathcal{J}_c(M)&\leq \mathcal{E}_c(\sqrt{\tfrac{M}{M'}}v_n)=\frac{M}{M'}\mathcal{E}_c(v_n)-\frac{1}{p+1}\frac{M}{M^\prime}\left[\left(\frac{M}{ M'}\right)^\frac{p-1}{2}-1\right]\|v_n\|_{L^{p+1}(\mathbb{R})}^{p+1}\\
&\leq\frac{M}{M'}\mathcal{J}_c(M')-\frac{1}{p+1}\frac{M}{M^\prime}\left[\left(\frac{M}{ M'}\right)^\frac{p-1}{2}-1\right]\delta_0-o_n(1),
\end{aligned}$$
and thus,
$$\mathcal{J}_c(M')\geq \frac{M'}{M}\mathcal{J}_c(M)+\frac{1}{p+1}\left[\left(\frac{M}{ M'}\right)^\frac{p-1}{2}-1\right]\delta_0-o_n(1).$$
Moreover, by switching the roles of $M'$ and $M-M'$, we have
$$\mathcal{J}_c(M-M')\geq \frac{M-M'}{M}\mathcal{J}_c(M)+\frac{1}{p+1}\left[\left(\frac{M}{M-M'}\right)^\frac{p-1}{2}-1\right]\delta_0-o_n(1)$$
for sufficiently small $\delta_0>0$. In combination, we obtain
$$\mathcal{J}_c(M)\geq\mathcal{J}_c(M)+\frac{1}{p+1}\left[\left(\frac{M}{ M'}\right)^\frac{p-1}{2}+\left(\frac{M}{M-M'}\right)^\frac{p-1}{2}-2\right]\delta_0 -o_n(1),$$
which deduces a contradiction. 
 
We now show that $u_n\rightarrow \tilde{u}$ in $H^{1/2}(\mathbb{R})$, and the limit $\tilde{u}$ is a minimizer for $\mathcal{J}_c(M)$. Indeed, $\tilde{u}$ is admissible because $u_n\to \tilde{u}$ in $L^2(\mathbb{R})$. Moreover, by applying the Gagliardo-Nirenberg inequality (Proposition \ref{GN type inequality}) to $(u_n-\tilde{u})$, we can show that $u_n\to \tilde{u}$ in $L^{p+1}(\mathbb{R})$. Thus, owing to the weak lower semi-continuity of the kinetic energy, it follows that $\mathcal{J}_c(M)=\mathcal{E}_c(u_n)+o_n(1)\geq \mathcal{E}_c(\tilde{u})+\frac12\|\sqrt{\mathcal{H}_c}(u_n-\tilde{u})\|_{L^2(\mathbb{R})} +o_n(1)$. By minimality, it is shown that $\mathcal{E}_c(\tilde{u})=\mathcal{J}_c(M)$ and $u_n\rightarrow \tilde{u}$ in $H^{1/2}(\mathbb{R})$.

It remains to show that $\tilde{u}$ is non-negative up to the phase shift, decreasing, and symmetric. 
Let $\tilde{u}^*$ be the symmetric rearrangement of $\tilde{u}$. Then, $\|\sqrt{\mathcal{H}_c}\tilde{u}^*\|_{L^{2}(\mathbb{R})}\leq\|\sqrt{\mathcal{H}_c}\tilde{u}\|_{L^{2}(\mathbb{R})}$, where the equality holds only if $\tilde{u}=e^{i\theta}\tilde{u}^*(\cdot-x_0)$ for some $\theta\in\mathbb{R}$ and $x_0\in\mathbb{R}$ (see \cite[Appendix A]{LY} or the proof of \cite[Proposition 2.1]{SH} for details). Moreover, we have $\|\tilde{u}^*\|_{L^{p+1}(\mathbb{R})}=\|\tilde{u}\|_{L^{p+1}(\mathbb{R})}$ and $\|\tilde{u}^*\|_{L^2(\mathbb{R})}^2=\|\tilde{u}\|_{L^2(\mathbb{R})}^2=M$. Thus, symmetrization strictly decreases the energy, and a minimizer must be the symmetrization of itself. 
\end{proof}

\section{Non-relativistic limit: proof of Proposition \ref{prop: non-relativistic limit}}\label{nlim}
In this section, we prove the non-relativistic limit. First, we show that relativistic minimizers are uniformly more regular according to a standard argument.

\begin{lemma}[Elliptic regularity]\label{elliptic regularity}
Let $p\in [3,5)$. Then, there exists $c_0\geq 1$ such that $\underset{c\geq c_0}\sup\|Q_c\|_{H^2(\mathbb{R})}<\infty$, where $Q_c$ is a minimizer constructed in Theorem \ref{cptness}.
\end{lemma}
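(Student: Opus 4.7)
The plan is to extract regularity directly from the Euler-Lagrange equation
$$(\mathcal{H}_c + \mu_c) Q_c = Q_c^p$$
by inverting $\mathcal{H}_c + \mu_c$ in frequency and controlling the resulting symbol uniformly in $c$. I would bootstrap from $H^{1/2}$ up to $H^2$ in two steps that each gain one derivative. Two preliminary inputs are needed: a uniform $H^{1/2}$ bound on $Q_c$ and a uniform positive lower bound on $\mu_c$.

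For the $H^{1/2}$ bound, Lemma~\ref{local min structure} gives $\|\sqrt{\mathcal{H}_c}Q_c\|_{L^2}\le \alpha^{\frac{2}{5-p}}M^{\frac{p+3}{2(5-p)}}$ uniformly in $c$; combined with the mass constraint and the pointwise estimate $m_c(\xi):=\sqrt{c^2\xi^2+c^4/4}-c^2/2\gtrsim |\xi|$ for $|\xi|\ge 1$ and $c\ge 1$ (a direct consequence of Lemma~\ref{symbol}), this yields $\|Q_c\|_{H^{1/2}}\lesssim 1$ uniformly. For the lower bound on $\mu_c$, I would test the Euler-Lagrange equation against $Q_c$ and rearrange using the definition of $\mathcal{E}_c$ to obtain
$$\mu_c M=\tfrac{p-1}{p+1}\|Q_c\|_{L^{p+1}}^{p+1}-2\mathcal{E}_c(Q_c)\ge -2\mathcal{J}_c(M)\ge -2\mathcal{J}_\infty(M)>0,$$
where Lemma~\ref{minimum energy comparison} and \eqref{non-relativistic minimum energy} supply the last two inequalities. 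Hence $\mu_c\ge \mu_0>0$ uniformly in $c$.

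The crux of the argument is the following multiplier estimate: for $c\ge 1$ with $\mu_c\ge \mu_0$, the symbol $\bra\xi/(m_c(\xi)+\mu_c)$ is bounded in $L^\infty(\mathbb{R})$ uniformly in $c$. Indeed, Lemma~\ref{symbol} gives $m_c(\xi)\gtrsim |\xi|^2$ on $|\xi|\le c$ and $m_c(\xi)\gtrsim c|\xi|$ on $|\xi|\ge c$, so the ratio is bounded by an absolute constant in both regimes (and is in fact $\lesssim 1/c$ on the high-frequency piece). Applying this multiplier to the inverted equation $\hat{Q}_c=\widehat{Q_c^p}/(m_c+\mu_c)$, together with $\|Q_c^p\|_{L^2}=\|Q_c\|_{L^{2p}}^p\lesssim \|Q_c\|_{H^{1/2}}^p\lesssim 1$ via the 1D Sobolev embedding $H^{1/2}\hookrightarrow L^{2p}$, yields $\|Q_c\|_{H^1}\lesssim 1$. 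The 1D embedding $H^1\hookrightarrow L^\infty$ then supplies a uniform pointwise bound on $Q_c$, and the chain rule gives $\|Q_c^p\|_{H^1}\lesssim \|Q_c\|_{L^\infty}^{p-1}\|Q_c\|_{H^1}\lesssim 1$. Reapplying the same multiplier estimate, now with one derivative of input, gives $\|Q_c\|_{H^2}\lesssim \|Q_c^p\|_{H^1}\lesssim 1$, completing the proof.

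The main obstacle is that one cannot pass directly from $Q_c^p\in L^2$ to $Q_c\in H^2$ in a single shot: on high frequencies $|\xi|\ge c$ the symbol $m_c(\xi)$ is only linear in $|\xi|$, so the multiplier $\bra\xi^2/(m_c+\mu_c)$ behaves like $|\xi|/c$ and fails to be uniformly bounded in $L^\infty$. It is precisely the incremental bootstrap, with the $L^\infty$ bound extracted at the intermediate $H^1$ stage providing the Moser-type control on $Q_c^p$, that makes the uniform $H^2$ estimate go through.
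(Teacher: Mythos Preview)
Your proof is correct and follows essentially the same route as the paper's: both establish a uniform positive lower bound on $\mu_c$ from Lemma~\ref{minimum energy comparison}, then invert $(\mathcal{H}_c+\mu_c)$ using the symbol bounds of Lemma~\ref{symbol} and bootstrap in two steps $H^{1/2}\to H^1\to H^2$, with the intermediate $L^\infty$ control on $Q_c$ feeding the chain-rule estimate for $Q_c^p$. Your write-up is somewhat more explicit about the multiplier bound $\langle\xi\rangle/(m_c(\xi)+\mu_c)\in L^\infty$ uniformly in $c$, and your closing remark explaining why a single-step jump to $H^2$ fails on high frequencies is a useful clarification, but there is no substantive difference in method.
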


\begin{proof}
Let $c\geq c_0$ be a sufficiently large $c_0\geq 1$. The implicit constants below are independent of $c\geq c_0$. Note that $\mu_c\geq -\frac{p+1}{M}\mathcal{J}_\infty(M)$, where $\mu_c$ is the Lagrange multiplier for $\mathcal{H}_cQ_c-Q_c^p=-\mu_cQ_c$, since by Lemma \ref{minimum energy comparison},
\begin{equation}\label{largr1}
\begin{aligned}
\mathcal{J}_\infty(M)&\geq\mathcal{E}_c(Q_c)=\frac{1}{2}\|\sqrt{\mathcal{H}_c}Q_c\|_{L^2(\mathbb{R})}^2-\frac{1}{p+1}\|Q_c\|_{L^{p+1}(\mathbb{R})}^{p+1}\\
&\geq\frac{1}{p+1} \int_{\mathbb{R}}(\mathcal{H}_cQ_c-Q_c^p)Q_c dx=-\frac{\mu_c}{p+1}\|Q_c\|_{L^2(\mathbb{R})}^2=-\frac{M}{p+1}\mu_c.
\end{aligned}
\end{equation}
Hence, by Lemma \ref{symbol} and the claim, $\|(\mathcal{H}_c+\mu_c)^{-1}f\|_{L^2(\mathbb{R})}\lesssim\|f\|_{H^{-1}(\mathbb{R})}$. Thus, by Lemma \ref{symbol} and Lemma \ref{local min structure}, we obtain 
\begin{align*}
\|Q_c\|_{\dot{H}^1(\mathbb{R})}&=\|(\mathcal{H}_c+\mu_c)^{-1}Q_c^p\|_{\dot{H}^1(\mathbb{R})}\lesssim \|Q_c^p\|_{L^2(\mathbb{R})}\\
&\lesssim  \|Q_c\|_{H^\frac12(\mathbb{R})}^p\lesssim (\|\sqrt{\mathcal{H}_c}Q_c\|_{L^{2}(\mathbb{R})}+\|Q_c\|_{L^2(\mathbb{R})})^p\leq (\alpha^{\frac{2}{5-p}} M^\frac{p+3}{2(5-p)}+\sqrt{M})^p,
\end{align*}
and hence
$$\|Q_c\|_{\dot{H}^2(\mathbb{R})}=\|(\mathcal{H}_c+\mu_c)^{-1}Q_c^p\|_{\dot{H}^2(\mathbb{R})}\lesssim \|Q_c^p\|_{\dot{H}^1(\mathbb{R})}\lesssim \|Q_c\|_{L^\infty(\mathbb{R})}^{p-1} \|Q_c\|_{\dot{H}^1(\mathbb{R})}\lesssim \|Q_c\|_{H^1(\mathbb{R})}^p.$$
\end{proof}

\begin{proof}[Proof of Proposition \ref{prop: non-relativistic limit}]
To prove the non-relativistic limit $Q_c\to Q_\infty$ in $H^1(\R)$, by the concentration-compactness property of the well-known variational problem $\mathcal{J}_\infty(M)$ (see \cite{Lions}) and by the uniqueness of the minimizer $Q_\infty$ (see \cite{K}), it is sufficient to show that $\{Q_c\}_{c\geq c_0}$ is a minimizing sequence for $\mathcal{J}_\infty(M)$, where $c_0\geq1$ is a large number chosen in Lemma \ref{elliptic regularity}. Indeed, we have 
$$\mathcal{E}_\infty(Q_c)=\mathcal{E}_c(Q_c)+\frac{1}{2}\left\{\|\partial_x Q_c\|_{L^2(\mathbb{R})}^2-\|\sqrt{\mathcal{H}_c} Q_c\|_{L^2(\mathbb{R})}^2\right\},$$
but by the Plancherel theorem, 
\begin{equation}\label{kinetic energy comparison}
\begin{aligned}
\|\partial_x Q_c\|_{L^2(\mathbb{R})}^2-\|\sqrt{\mathcal{H}_c} Q_c\|_{L^2(\mathbb{R})}^2&=\frac{1}{2\pi}\int_{\mathbb{R}}\left\{|\xi|^2-\Big(\sqrt{c^2|\xi|^2+\tfrac{c^4}{4}}-\tfrac{c^2}{2}\Big)\right\}|\hat{Q}_c(\xi)|^2d\xi\\
&=\frac{1}{2\pi}\int_{\mathbb{R}}\frac{|\xi|^4}{\sqrt{c^2|\xi|^2+\tfrac{c^4}{4}}+\tfrac{c^2}{2}+|\xi|^2}|\hat{Q}_c(\xi)|^2d\xi\\
&\leq\frac{1}{2\pi c^2}\||\xi|^2\hat{Q}_c\|_{L^2(\mathbb{R})}^2=\frac{1}{c^2}\|Q_c\|_{\dot{H}^2(\mathbb{R})}^2.
\end{aligned}
\end{equation}
Thus, by Lemma \ref{elliptic regularity}, $\mathcal{J}_\infty(M)\leq\mathcal{E}_\infty(Q_c)=\mathcal{E}_c(Q_c)+o_c(1)=\mathcal{J}_c(M)+o_c(1)$, and Lemma \ref{minimum energy comparison} proves that $\{Q_c\}_{c\geq c_0}$ is a minimizing sequence for $\mathcal{J}_\infty(M)$. Here, $o_c(1)$ means that $o_c(1)=a_c\rightarrow 0$ as $c\rightarrow \infty$, where $a_c\in \R$.

Finally, by equation \eqref{kinetic energy comparison} and the non-relativistic limit $Q_c\to Q_\infty$ in $H^1(\mathbb{R})$, we prove that
$$\begin{aligned}
(\mu_c-\mu_\infty)M&=\mu_c \| {Q}_c\|_{L^2(\mathbb{R})}^2-\mu_\infty \| {Q}_\infty\|_{L^2(\mathbb{R})}^2\\
&=-\|\sqrt{\mathcal{H}_c} {Q}_c\|_{L^2(\mathbb{R})}^2 +\|Q_c\|_{L^{p+1}(\R)}^{p+1}+\|\partial_x Q_\infty\|_{L^2(\mathbb{R})}^2 -\|Q_\infty\|_{L^{p+1}(\R)}^{p+1}\to 0.
\end{aligned}$$ 
\end{proof}

\section{Uniqueness: Proof of Theorem \ref{thm: uniqueness}}\label{unq} 
%

We establish the uniqueness of a pseudo-relativistic minimizer, provided that $c\geq 1$ is sufficiently large. A key component is the property of the linearized operator 
$$\mathcal{L}_\infty=-\partial_x^2-pQ_\infty^{p-1}+\mu_\infty,$$
where $Q_\infty$ is a unique positive, symmetric, decreasing minimizer for $\mathcal{J}_\infty(M)$. We denote the set of collections of symmetric $H^s(\mathbb{R})$ functions by $ H _{rad}^ s(\R) $.

\begin{lemma}[Weinstein \cite{We}]\label{limit linearized operator}
For $p\in(1,5)$, there exists $C_0>0$ such that 
$$\langle\mathcal{L}_\infty v, v\rangle_{L^2(\mathbb{R})}\geq C_0\|v\|_{H^1(\mathbb{R})}^2$$
for all $v\in H_{rad}^1(\mathbb{R})$ such that $\langle v,Q_\infty\rangle_{L^2(\mathbb{R})}=0$.
\end{lemma}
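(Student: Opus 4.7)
The statement is Weinstein's classical coercivity result for the linearized operator at the non-relativistic ground state. My plan is to prove it via a spectral analysis of $\mathcal{L}_\infty$ on the radial subspace, combined with a constrained minimization argument that uses the fact that $Q_\infty$ is the ground state of $\mathcal{J}_\infty(M)$.

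\textbf{Step 1: Spectrum of $\mathcal{L}_\infty$ on $L^2(\mathbb{R})$.} Since $Q_\infty$ is smooth and exponentially decaying, $\mathcal{L}_\infty$ is a relatively compact perturbation of $-\partial_x^2+\mu_\infty$, so its essential spectrum is $[\mu_\infty,\infty)$. Differentiating the Euler--Lagrange equation $-\partial_x^2 Q_\infty - Q_\infty^p + \mu_\infty Q_\infty = 0$ in $x$ gives $\mathcal{L}_\infty(\partial_x Q_\infty)=0$, so $\partial_x Q_\infty$ lies in the kernel. Because $Q_\infty$ is positive and symmetric decreasing, $\partial_x Q_\infty$ has exactly one sign change at $x=0$. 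Sturm--Liouville theory then tells us $\partial_x Q_\infty$ is the second eigenfunction, so $\mathcal{L}_\infty$ has exactly one simple negative eigenvalue $-\lambda_0<0$ with a positive (hence radial) eigenfunction $\phi_0$, a simple zero eigenvalue with eigenfunction $\partial_x Q_\infty$ (antisymmetric), and the rest of the spectrum lies in $[\mu_\infty,\infty)$.

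\textbf{Step 2: Restriction to $H^1_{\mathrm{rad}}$.} The direction $\partial_x Q_\infty$ is antisymmetric, so it drops out of the radial subspace. Thus the restriction $\mathcal{L}_\infty|_{H^1_{\mathrm{rad}}}$ has trivial kernel, a single negative eigenvalue $-\lambda_0$ with radial eigenfunction $\phi_0$, and its remaining spectrum is bounded below by $\mu_\infty>0$. The crucial input now is that $Q_\infty$ minimizes $\mathcal{E}_\infty$ under the mass constraint $\mathcal{M}(u)=M$, so the second-order necessary condition gives $\langle \mathcal{L}_\infty v, v\rangle_{L^2}\geq 0$ for every radial $v$ with $\langle v, Q_\infty\rangle_{L^2}=0$. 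In particular, this forces $\langle \phi_0, Q_\infty\rangle_{L^2}\neq 0$ (otherwise $\phi_0$ would be a tangent direction producing a negative second variation, contradicting minimality).

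\textbf{Step 3: Strict $L^2$ coercivity by contradiction.} Set
\[
\kappa := \inf\Big\{\langle \mathcal{L}_\infty v, v\rangle_{L^2}: v\in H^1_{\mathrm{rad}}(\mathbb{R}),\ \|v\|_{L^2}=1,\ \langle v, Q_\infty\rangle_{L^2}=0\Big\}\geq 0.
\]
Suppose $\kappa=0$ and take a minimizing sequence $\{v_n\}$. The energy inequality together with $\|v_n\|_{L^2}=1$ gives a uniform $H^1_{\mathrm{rad}}$-bound, so $v_n\rightharpoonup v_*$ in $H^1$ and, by the compact embedding of $H^1_{\mathrm{rad}}$ into $L^q$ for $2<q<\infty$, the potential term converges: $\int Q_\infty^{p-1} v_n^2 \to \int Q_\infty^{p-1} v_*^2$. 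A Lagrange multiplier argument at the limit shows $v_*$ is an eigenfunction of $\mathcal{L}_\infty|_{\{Q_\infty\}^\perp\cap H^1_{\mathrm{rad}}}$ with eigenvalue $0$. If $v_*\neq 0$, then $v_*\in\ker \mathcal{L}_\infty\cap H^1_{\mathrm{rad}}=\{0\}$, contradiction. If $v_*=0$, then $\int Q_\infty^{p-1}v_n^2\to 0$, so $\langle \mathcal{L}_\infty v_n,v_n\rangle = \|\partial_x v_n\|_{L^2}^2 + \mu_\infty - o(1)\geq \mu_\infty>0$, contradicting $\kappa=0$. Hence $\kappa>0$, giving $\langle \mathcal{L}_\infty v, v\rangle_{L^2}\geq \kappa\|v\|_{L^2}^2$ on the constrained subspace.

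\textbf{Step 4: Upgrade to the $H^1$ norm.} From the definition of $\mathcal{L}_\infty$,
\[
\|\partial_x v\|_{L^2}^2 = \langle \mathcal{L}_\infty v, v\rangle_{L^2} - \mu_\infty \|v\|_{L^2}^2 + p\int Q_\infty^{p-1} v^2\,dx \leq \langle \mathcal{L}_\infty v, v\rangle_{L^2} + p\|Q_\infty\|_{L^\infty}^{p-1}\|v\|_{L^2}^2,
\]
and combining with the $L^2$ lower bound from Step~3 yields $\|v\|_{H^1}^2 \leq C_0^{-1}\langle \mathcal{L}_\infty v, v\rangle_{L^2}$ for a suitable $C_0>0$, which is the claimed inequality.

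The main obstacle is Step~3: the passage from semi-positivity (the easy consequence of minimality) to a strict spectral gap. This requires the compactness supplied by the radial embedding and careful handling of the two weak-limit scenarios; once this is in place, the $H^1$ upgrade is routine.
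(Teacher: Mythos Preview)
The paper does not prove this lemma; it simply cites Weinstein and uses the statement as a black box. So there is no ``paper's own proof'' to compare against, and your proposal must be judged on its own merits. The overall architecture is correct, but two steps contain genuine gaps.

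\medskip

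\textbf{First gap (Step 3, compactness).} You invoke ``the compact embedding of $H^1_{\mathrm{rad}}$ into $L^q$ for $2<q<\infty$.'' In one space dimension, radial simply means even, and even $H^1(\mathbb{R})$ functions are \emph{not} compactly embedded in any $L^q(\mathbb{R})$; Strauss-type compactness requires $d\geq 2$. The convergence $\int Q_\infty^{p-1}v_n^2\to\int Q_\infty^{p-1}v_*^2$ that you need is still true, but it must be argued from the exponential decay of $Q_\infty$ combined with local Rellich compactness, not from a nonexistent global embedding.

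\medskip

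\textbf{Second gap (Step 3, Lagrange multiplier).} Once you have a minimizer $v_*$ with $\langle\mathcal{L}_\infty v_*,v_*\rangle=0$, the Euler--Lagrange equation for the two-constraint problem is
\[
\mathcal{L}_\infty v_*=\lambda Q_\infty\quad\text{for some }\lambda\in\mathbb{R},
\]
not $\mathcal{L}_\infty v_*=0$. Lying in the kernel of the compressed operator $P^\perp\mathcal{L}_\infty P^\perp$ is not the same as lying in $\ker\mathcal{L}_\infty$, so your contradiction in the case $v_*\neq 0$ does not follow. To rule out $\lambda\neq 0$ one needs the Vakhitov--Kolokolov-type sign condition $\langle\mathcal{L}_\infty^{-1}Q_\infty,Q_\infty\rangle<0$: indeed, if $\lambda\neq 0$ then $v_*=\lambda\,\mathcal{L}_\infty^{-1}Q_\infty$ (on the radial subspace, where $\mathcal{L}_\infty$ is invertible), and the constraint $\langle v_*,Q_\infty\rangle=0$ would force $\langle\mathcal{L}_\infty^{-1}Q_\infty,Q_\infty\rangle=0$. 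The standard verification uses scaling: writing $Q_\mu(x)=\mu^{1/(p-1)}Q_1(\sqrt{\mu}\,x)$ one differentiates the profile equation to get $\mathcal{L}_\infty(\partial_\mu Q_\mu)=-Q_\mu$, whence
\[
\langle\mathcal{L}_\infty^{-1}Q_\infty,Q_\infty\rangle=-\tfrac12\,\tfrac{d}{d\mu}\|Q_\mu\|_{L^2}^2\Big|_{\mu=\mu_\infty},
\]
and $\|Q_\mu\|_{L^2}^2=C\mu^{(5-p)/(2(p-1))}$ is strictly increasing exactly when $p<5$. This is precisely where the hypothesis $p\in(1,5)$ enters, and your argument as written never uses it. (A minor related imprecision in Step~1: the discrete spectrum above $0$ need not jump directly to $\mu_\infty$; what Sturm--Liouville actually gives, and what suffices, is that any third eigenvalue is strictly positive.)
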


We prove that a similar lower bound holds for the linearized operator 
$$\mathcal{L}_c=-\mathcal{H}_c-pQ_c^{p-1}+\mu_c,$$
where $Q_c$ is a minimizer for $\mathcal{J}_c(M)$.

\begin{lemma}\label{eigv}
If $3\leq p<5$, then there exists $C>0$, independent of sufficiently large $c\geq 1$, such that 
$$\langle\mathcal{L}_c v, v\rangle_{L^2(\mathbb{R})}\geq C\|v\|_{H^{1/2}(\mathbb{R})}^2$$
for all $v\in H_{rad}^{1/2}(\mathbb{R})$ such that $\langle v,Q_c\rangle_{L^2(\mathbb{R})}=0$.
\end{lemma}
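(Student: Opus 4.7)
The plan is to argue by contradiction, reducing the coercivity of $\mathcal{L}_c$ on the constraint to Weinstein's bound for $\mathcal{L}_\infty$ (Lemma \ref{limit linearized operator}) via the non-relativistic limit $Q_c\to Q_\infty$ in $H^1(\R)$ and $\mu_c\to\mu_\infty$ from Proposition \ref{prop: non-relativistic limit}. Suppose the conclusion fails. Then there exist $c_n\to\infty$ and $v_n\in H_{rad}^{1/2}(\R)$ with $\|v_n\|_{H^{1/2}}=1$, $\langle v_n,Q_{c_n}\rangle_{L^2}=0$, and $\langle \mathcal{L}_{c_n}v_n,v_n\rangle_{L^2}\leq 1/n$. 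Passing to a subsequence, $v_n\rightharpoonup v_*$ weakly in $H_{rad}^{1/2}(\R)$; since $Q_{c_n}\to Q_\infty$ in $L^2(\R)$, the orthogonality passes to the limit as $\langle v_*, Q_\infty\rangle_{L^2}=0$.

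The core step is the liminf inequality
\[
\liminf_{n\to\infty}\langle \mathcal{L}_{c_n}v_n,v_n\rangle_{L^2}\;\geq\;\langle \mathcal{L}_\infty v_*,v_*\rangle_{L^2},
\]
interpreted as $+\infty$ on the right when $v_*\notin H^1(\R)$. A direct computation shows that the symbol $\mathcal{H}_c(\xi)=\tfrac{c^2}{2}(\sqrt{1+4\xi^2/c^2}-1)$ is pointwise increasing in $c$ with $\mathcal{H}_c(\xi)\uparrow \xi^2$ as $c\to\infty$. Restricting the Plancherel integral to the low-frequency block $|\xi|\leq R$, the convergence $\mathcal{H}_{c_n}(\xi)\to \xi^2$ is uniform there; applying weak lower semicontinuity of $v\mapsto \|\partial_x P_{\leq R}v\|_{L^2}^2$ and then letting $R\to\infty$ yields $\liminf_n \langle \mathcal{H}_{c_n}v_n,v_n\rangle_{L^2}\geq \|\partial_x v_*\|_{L^2}^2$. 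For the potential term I would show $\int Q_{c_n}^{p-1}v_n^2\,dx\to \int Q_\infty^{p-1}v_*^2\,dx$: the $H^1$-convergence of $Q_{c_n}$ together with the 1D Sobolev embedding $H^1(\R)\hookrightarrow L^\infty(\R)$ gives $\|Q_{c_n}^{p-1}-Q_\infty^{p-1}\|_{L^\infty}\to 0$, while Rellich's compact embedding $H^{1/2}(\R)\hookrightarrow L^2_{\mathrm{loc}}(\R)$ combined with the exponential decay of $Q_\infty$ yields $\int Q_\infty^{p-1}v_n^2\,dx\to \int Q_\infty^{p-1}v_*^2\,dx$. With $\mu_{c_n}\to\mu_\infty>0$ and weak lower semicontinuity of $\|\cdot\|_{L^2}^2$, this closes the liminf bound.

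The hypothesis $\langle \mathcal{L}_{c_n}v_n,v_n\rangle_{L^2}\leq 1/n$ then forces $\langle \mathcal{L}_\infty v_*,v_*\rangle_{L^2}\leq 0$, and in particular $v_*\in H^1(\R)$. Since $v_*\in H_{rad}^1(\R)$ and $v_*\perp Q_\infty$ in $L^2$, Lemma \ref{limit linearized operator} yields $v_*=0$. Substituting back, $\int Q_{c_n}^{p-1}v_n^2\,dx\to 0$, and hence
\[
\langle \mathcal{H}_{c_n}v_n,v_n\rangle_{L^2}+\mu_{c_n}\|v_n\|_{L^2}^2=\langle \mathcal{L}_{c_n}v_n,v_n\rangle_{L^2}+p\int Q_{c_n}^{p-1}v_n^2\,dx \to 0.
\]
On the other hand, by Lemma \ref{symbol}, $\mathcal{H}_c(\xi)\geq |\xi|/2$ for $|\xi|\geq 1$ and $c\geq 1$, while $\mu_c\geq \mu_\infty/2>0$ for $c$ sufficiently large. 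Splitting into low and high frequencies gives $\mathcal{H}_c(\xi)+\mu_c\gtrsim \langle\xi\rangle$ uniformly in large $c$, and hence $\langle \mathcal{H}_{c_n}v_n,v_n\rangle_{L^2}+\mu_{c_n}\|v_n\|_{L^2}^2\gtrsim \|v_n\|_{H^{1/2}}^2=1$, contradicting the previous display.

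The main obstacle I anticipate is the liminf step for the kinetic quadratic form: because $\mathcal{H}_c$ exhibits only $|\xi|$-growth at high frequencies, it cannot be compared directly with $-\partial_x^2$ on $H^{1/2}$, and the monotone convergence $\mathcal{H}_c(\xi)\uparrow \xi^2$ must be exploited through a frequency-truncation argument in order to force $v_*\in H^1(\R)$ rather than allow it to sit at the $H^{1/2}$ endpoint.
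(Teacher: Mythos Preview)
Your argument is correct and proceeds by a genuinely different route from the paper. The paper first reduces to an $L^2$--coercivity bound $\langle\mathcal{L}_c v,v\rangle\ge\tilde C\|v\|_{L^2}^2$ and, to obtain it, solves for each fixed $c$ the constrained minimization $\lambda_c=\inf\{\langle\mathcal{L}_c v,v\rangle:\|v\|_{L^2}=1,\ v\perp Q_c\}$; when the minimizer $\tilde v_c$ is nontrivial it satisfies an Euler--Lagrange equation, and elliptic regularity then yields a uniform $H^2$ bound which is used (via the kinetic energy comparison \eqref{kinetic energy comparison}) to replace $\mathcal{H}_c$ by $-\partial_x^2$ with an $O(c^{-2})$ error. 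A second short contradiction argument upgrades the $L^2$ bound to $H^{1/2}$. Your proof bypasses the auxiliary variational problem and the elliptic regularity step entirely: the pointwise monotonicity $\mathcal{H}_c(\xi)\uparrow\xi^2$ together with a frequency truncation gives the liminf inequality for the kinetic form directly at the $H^{1/2}$ level, and local compactness plus the decay of $Q_\infty$ handles the potential term. This is more elementary and yields the $H^{1/2}$ bound in one stroke; the paper's approach, on the other hand, produces along the way the useful information that $\lambda_c\to\lambda_\infty\geq C_0$ quantitatively through the $H^2$ comparison, at the cost of invoking regularity for the constrained minimizer.
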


\begin{proof}
It is sufficient to show that
\begin{equation}\label{eigv proof}
\langle\mathcal{L}_c v, v\rangle_{L^2(\mathbb{R})}\geq \tilde{C}\|v\|_{L^2(\mathbb{R})}^2
\end{equation}
for all $v\in H_{rad}^{1/2}(\mathbb{R})$, such that $\langle v,Q_c\rangle_{L^2(\mathbb{R})}=0$, where $\tilde{C}>0$ is a constant independent of sufficiently large $c\geq 1$. Indeed, if it is true but there is $\{v_c\}_{c\geq c_0}\subset H_{rad}^{1/2}(\mathbb{R})$ such that $\langle v_c, Q_c \rangle_{L^2(\R)}=0$, $\|v_c\|_{H^{1/2}(\R)}=1$ but $\langle \mathcal{L}_c v_c,v_c\rangle_{L^2(\R)}\rightarrow 0$, then by \eqref{eigv proof}, $\| v_c\|_{L^2(\R)}=\tilde{C}^{-1}\langle\mathcal{L}_c v_c, v_c\rangle_{L^2(\mathbb{R})}\rightarrow 0$, and thus $v_c\rightharpoonup 0$ in $H^{1/2}(\R)$. Thus, we have
  $$
o_c(1)=\langle \mathcal{L}_c v_c,v_c\rangle_{L^2(\R)}=\langle  (\mathcal{H}_c +\mu_c)v_c,v_c\rangle_{L^2(\R)} -p\langle Q_c^{p-1} v_c,v_c\rangle_{L^2(\R)}\geq \|v_c\|_{H^{1/2}(\R)}^2+o_c(1). 
$$
which deduces a contradiction.

We suppose that $c\geq 1$ is sufficiently large and define
\begin{equation}\label{relativistic linearized variational problem}
\lambda_c:=\inf\Big\{\langle\mathcal{L}_c v, v\rangle_{L^2(\mathbb{R})}:\ v\in H_{rad}^{1/2}(\mathbb{R}),\ \|v\|_{L^2(\mathbb{R})}=1,\ \langle v,Q_c\rangle_{L^2(\mathbb{R})}=0\Big\}.
\end{equation}
Let $C_0$ be a constant in Lemma \ref{limit linearized operator}, and let $\mu_\infty$ be the Lagrange multiplier for the elliptic equation for the non-relativistic minimizer $Q_\infty$. If $\lambda_c\geq\frac{1}{2}\min\{C_0, \mu_\infty\}$, then \eqref{eigv proof} follows. We assume that  $\lambda_c<\frac{1}{2}\min\{C_0, \mu_\infty\}$. Let $\{v_{c,n}\}_{n=1}^\infty$ be a minimizing sequence for \eqref{relativistic linearized variational problem}. Then, it is uniformly bounded in $H^{1/2}(\mathbb{R})$. Hence, $v_{c,n}\rightharpoonup \tilde{v}_c$ in $H^{1/2}(\mathbb{R})$.

If $\tilde{v}_c\equiv0$, then
$$\begin{aligned}
\lambda_c+o_n(1)&=\langle\mathcal{L}_c v_{c,n}, v_{c,n}\rangle_{L^2(\mathbb{R})}\\
&\geq \mu_c\|v_{c,n}\|_{L^2(\mathbb{R})}^2-p\langle Q_c^{p-1} v_{c,n}, v_{c,n}\rangle_{L^2(\mathbb{R})}\\
&\geq \mu_c-o_n(1)=\mu_\infty-o_c(1)-o_n(1),
\end{aligned}$$
which contradicts $\lambda_c< \frac{\mu_\infty}{2}$.

Suppose that $\tilde{v}_c\neq 0$. Then, by replacing $\tilde{v}_c$ with its normalization $\frac{\tilde{v}_c}{\|\tilde{v}_c\|_{L^2(\mathbb{R})}}$, but still denoted by $\tilde{v}_c$, we obtain a minimizer $\tilde{v}_c$ for \eqref{relativistic linearized variational problem}. Then, because \eqref{relativistic linearized variational problem} is a two-constraint minimization problem, $\tilde{v}_c$ must obey the linear elliptic equation
$$\mathcal{L}_c \tilde{v}_c=\lambda_c \tilde{v}_c+\tilde{\lambda}_c Q_c$$
for some Lagrange multipliers, $\lambda_c$ and $\tilde{\lambda}_c$. Thus, by applying the standard elliptic regularity argument in the proof of Lemma \ref{elliptic regularity}, we can show that $\sup_{c\ge1} \|\tilde{v}_c\|_{H^2(\mathbb{R})}\leq {C}'$, where $C'$ is independent of $c\geq 1$. Now, we let
$$\begin{aligned}
V_c&=\tilde{v}_c-\frac{\langle \tilde{v}_c, Q_\infty\rangle_{L^2(\mathbb{R})}}{\|Q_\infty\|_{L^2(\mathbb{R})}^2}Q_\infty\\
&=\tilde{v}_c-\frac{\langle \tilde{v}_c, Q_c\rangle_{L^2(\mathbb{R})}}{M}Q_c-\frac{\langle \tilde{v}_c, Q_\infty-Q_c\rangle_{L^2(\mathbb{R})}}{M}Q_c-\frac{\langle \tilde{v}_c, Q_\infty\rangle_{L^2(\mathbb{R})}}{M}(Q_\infty-Q_c), \\
&=\tilde{v}_c+R_c,
\end{aligned}$$
because $\langle \tilde{v}_c, Q_c\rangle_{L^2(\mathbb{R})}=0$ and $Q_c\to Q_\infty$ in $H^1(\mathbb{R})$ by the non-relativistic limit (Proposition \ref{prop: non-relativistic limit}). Here, $R_c$ denotes a  function that converges to zero in $H^1(\mathbb{R})$ as $c\rightarrow \infty$. Then  by  \eqref{kinetic energy comparison}, Proposition \ref{prop: non-relativistic limit}, Lemma \ref{limit linearized operator} and the facts that $\langle V_c, Q_\infty\rangle_{L^2(\mathbb{R})}=0$ and $\sup_{c\ge1} \|\tilde{v}_c\|_{H^2(\mathbb{R})}\leq {C}'$, we prove that 
$$C_0\leq \frac{\langle\mathcal{L}_\infty V_c, V_c\rangle_{L^2(\mathbb{R})}}{\|V_c\|_{L^2(\mathbb{R})}^2}=\frac{\langle\mathcal{L}_c\tilde{v}_c, \tilde{v}_c\rangle_{L^2(\mathbb{R})}+o_c(1)}{\|\tilde{v}_c\|_{L^2(\mathbb{R})}^2+o_c(1)}=\lambda_c+o_c(1).$$
\end{proof}

\begin{proof}[Proof of Theorem \ref{thm: uniqueness}]

For a contradiction, we assume that the variational problem $\mathcal{J}_c(M)$ has two different minimizers $Q_c$ and $\tilde{Q}_c$, which are non-negative, symmetric and decreasing, and that $Q_c$ solves the Euler-Lagrange equation $\mathcal{H}_cQ_c-Q_c^p=-{\mu}_c Q_c$ for some Lagrange multiplier $\mu_c>0$. We decompose 
$$\tilde{Q}_c=\sqrt{1-\delta_c^2}Q_c+\epsilon_c, \quad \epsilon_c\neq 0,$$
where $\langle \epsilon_c, Q_c\rangle_{L^2(\R)}=0$. Note that 
\begin{equation}\label{epsilon delta relation}
\delta_c=\frac{1}{\sqrt{M}}\|\epsilon_c\|_{L^2(\R)}\rightarrow 0,
\end{equation}
because $M=\mathcal{M}(\tilde{Q}_c)=(1-\delta_c^2)M+\|\epsilon_c\|_{L^2(\mathbb{R})}^2$ and $(1-\sqrt{1-\delta_c^2})^2M+\|\epsilon_c\|_{L^2(\mathbb{R})}^2=\|\tilde{Q}_c-Q_c\|_{L^2(\mathbb{R})}^2\to 0$ by the non-relativistic limit.

We introduce the functional 
$$\mathcal{I}_c(u)=\mathcal{E}_c(u)+\frac{{\mu}_c}{2}\mathcal{M}(u).$$ 
Clearly, we have  $\mathcal{I}_c({Q}_c)=\mathcal{I}_c(\tilde{Q}_c)=\mathcal{J}_c(M)+\frac{\mu_c}{2}M$. 
Furthermore, by the uniform boundedness of $Q_c$ and $\tilde{Q}_c$, we have
$$\begin{aligned} 
\mathcal{I}_c(\tilde{Q}_c)&=\frac{1}{2}\big\|\sqrt{\mathcal{H}_c+\mu_c}(\sqrt{1-\delta_c^2}Q_c+\epsilon_c)\big\|_{L^2(\mathbb{R})}^2-\frac{1}{p+1}\|\sqrt{1-\delta_c^2}Q_c+\epsilon_c\|_{L^{p+1}(\mathbb{R})}^{p+1}\\
&=\left\{\frac{1-\delta_c^2}{2}\|\sqrt{\mathcal{H}_c+\mu_c} {Q}_c\|_{L^2(\mathbb{R})}^2 -\frac{(1-\delta_c^2)^\frac{p+1}{2}}{p+1}\| {Q}_c\|_{L^{p+1}(\mathbb{R})}^{p+1}\right\}\\
&\quad +  \big\langle \sqrt{1-\delta_c^2}(\mathcal{H}_c+\mu_c) Q_c -(1-\delta_c^2)^\frac{p}{2}  Q_c^p,\epsilon_c\big\rangle_{L^2(\R)}\\
&\quad +\frac12  \big\langle \mathcal{H}_c \epsilon_c-p(1-\delta_c^2)^\frac{p-1}{2}Q_c^{p-1}\epsilon_c+\mu_c \epsilon_c,  \epsilon_c \big\rangle_{L^2(\R)}+o_c(1)\| \epsilon_c \|_{H^{1/2}(\R)}^2.
\end{aligned}$$
Then, by applying the equation $(\mathcal{H}_c+\mu_c)Q_c-Q_c^p=0$ to the first two lines, we write 
$$\begin{aligned} 
\mathcal{I}_c(\tilde{Q}_c)&=\left\{\frac{1}{2}\|\sqrt{\mathcal{H}_c+\mu_c} {Q}_c\|_{L^2(\mathbb{R})}^2 -\Big(\frac{(1-\delta_c^2)^\frac{p+1}{2}}{p+1}+\frac{\delta_c^2}{2}\Big)\| {Q}_c\|_{L^{p+1}(\mathbb{R})}^{p+1}\right\}\\
&\quad +  \big\langle (\sqrt{1-\delta_c^2} -(1-\delta_c^2)^\frac{p}{2})  Q_c^p,\epsilon_c\big\rangle_{L^2(\R)}\\
&\quad +\frac12  \big\langle \mathcal{H}_c \epsilon_c-p(1-\delta_c^2)^\frac{p-1}{2}Q_c^{p-1}\epsilon_c+\mu_c \epsilon_c,  \epsilon_c \big\rangle_{L^2(\R)}+o_c(1)\| \epsilon_c \|_{H^{1/2}(\R)}^2.
\end{aligned}$$
Thus, it follows from \eqref{epsilon delta relation} that 
$$\mathcal{I}_c(\tilde{Q}_c)=\mathcal{I}_c(Q_c)+\frac12  \big\langle \mathcal{L}_c\epsilon_c+\mu_c \epsilon_c,  \epsilon_c \big\rangle_{L^2(\R)}+o_c(1)\| \epsilon_c \|_{H^{1/2}(\R)}^2.$$
Then, the non-degeneracy of $\mathcal{L}_c$ yields $0\geq \frac{C_0}{2}\|\epsilon_c\|_{H^{1/2}(\R)}^2+o_c(1)\|\epsilon_c\|_{H^{1/2}(\R)}^2$, which contradicts the fact that $\epsilon_c\neq 0$. 
\end{proof}

\appendix 

\section{Energy minimizer as a ground state}\label{sec: ground state}
 In this appendix, we show that although the energy minimizer $Q_c$ constructed in Theorem \ref{cptness} is not a global minimizer, it can be still called a \textit{ground state} in the sense that
$$
\mathcal{E}_c(Q_c)=\inf \Big\{\mathcal{E}_c(u): \mathcal{E}_c|_\mathcal{N}'(u)=0 \Big\},
$$
where $\mathcal{N}=\{v\in H^\frac12(\R) \ : \ \mathcal{M}(v)=M\}$. Indeed, this was shown in \cite{BGV}, but it is sketched here again for the sake of completeness.

We start with the following Pohozaev identity (see  \cite{FL}).
\begin{lemma}[Pohozaev identity]\label{pohole}
If $u\in H^\frac12(\R)$ solves  
\begin{equation}\label{elqs}
\mathcal{H}_cu-|u|^{p-1}u=-\mu u
\end{equation}
for some $\mu>0$, then
\begin{align*}
&\frac12\|\sqrt{\mathcal{H}_c}u\|_{L^2(\R)}^2-\frac{p-1}{2(p+1)}\|u\|_{L^{p+1}}^{p+1}+\frac{1}{4} \int_{\R}\frac{\sqrt{c^2|\xi|^2+\frac{c^4}{4}}-\frac{c^2}{2}}{\sqrt{\frac{1}{4 }+\frac{|\xi|^2}{c^2}}}|\hat{u}(\xi)|^2d\xi=0.
\end{align*}
\end{lemma}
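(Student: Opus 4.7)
The proof plan is to derive two scalar identities from the equation and combine them to eliminate $\mu$. Testing \eqref{elqs} against $\bar u$ and taking real parts yields the standard energy identity
$$
\|\sqrt{\mathcal{H}_c}u\|_{L^2(\mathbb{R})}^2 - \|u\|_{L^{p+1}(\mathbb{R})}^{p+1} + \mu\|u\|_{L^2(\mathbb{R})}^2 = 0. \qquad (\mathrm{E})
$$
For the Pohozaev-type identity, I would use a scaling argument: since $u$ is an unconstrained critical point of
$$
J(v) := \tfrac12\|\sqrt{\mathcal{H}_c}v\|_{L^2}^2 - \tfrac{1}{p+1}\|v\|_{L^{p+1}}^{p+1} + \tfrac{\mu}{2}\|v\|_{L^2}^2,
$$
setting $u_\lambda(x):=u(\lambda x)$, so that $\widehat{u_\lambda}(\xi)=\lambda^{-1}\hat u(\xi/\lambda)$, a change of variables gives $J(u_\lambda)=\frac{1}{4\pi\lambda}\int_\R m(\lambda\xi)|\hat u(\xi)|^2\,d\xi - \frac{1}{(p+1)\lambda}\|u\|_{L^{p+1}}^{p+1} + \frac{\mu}{2\lambda}\|u\|_{L^2}^2$, where $m(\xi):=\sqrt{c^2\xi^2+c^4/4}-c^2/2$ denotes the symbol of $\mathcal{H}_c$. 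The condition $\frac{d}{d\lambda}\bigl|_{\lambda=1}J(u_\lambda)=0$ then yields
$$
\frac{1}{4\pi}\int_\R \bigl(\xi m'(\xi)-m(\xi)\bigr)|\hat u(\xi)|^2\,d\xi + \frac{1}{p+1}\|u\|_{L^{p+1}}^{p+1} - \frac{\mu}{2}\|u\|_{L^2}^2 = 0. \qquad (\mathrm{P})
$$

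The key algebraic identity is
$$
\xi m'(\xi)-m(\xi) \;=\; \frac{1}{2}\cdot\frac{m(\xi)}{\sqrt{1/4+\xi^2/c^2}},
$$
which follows from $m'(\xi)=c^2\xi/\sqrt{c^2\xi^2+c^4/4}$ after placing both sides over the common denominator $\sqrt{c^2\xi^2+c^4/4}$. Substituting this into $(\mathrm{P})$ and forming $(\mathrm{E})+2(\mathrm{P})$ cancels the $\mu\|u\|_{L^2}^2$ term; dividing by $2$ then gives the stated identity (up to the usual Fourier-normalization factor).

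The main technical point is justifying differentiation under the integral sign in the kinetic term. This requires $u$ to possess Sobolev regularity beyond $H^{1/2}$; however, an elliptic bootstrap argument (as in Lemma~\ref{elliptic regularity}) applied to $\mathcal{H}_c u = |u|^{p-1}u - \mu u$ upgrades $u$ to $H^s(\R)$ for every $s\geq 0$. Since $|\xi m'(\lambda\xi)|\lesssim|\xi|$ uniformly for $\lambda$ in a neighborhood of $1$, dominated convergence applies and the resulting identity holds.
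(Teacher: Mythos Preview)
Your proposal is correct and follows essentially the same route as the paper: both derive the energy identity $(\mathrm{E})$ by testing against $u$, and the Pohozaev identity $(\mathrm{P})$ by a dilation argument---the paper pairs the equation with $x\partial_x u$ and computes $\langle\sqrt{-c^2\partial_x^2+c^4/4}\,u,\,x\partial_x u\rangle$ via $\tfrac{d}{d\lambda}\big|_{\lambda=1}\langle\cdot,u(\lambda\cdot)\rangle$, which is exactly your $\tfrac{d}{d\lambda}\big|_{\lambda=1}J(u_\lambda)$ unpacked term by term---and then combines $(\mathrm{E})$ and $(\mathrm{P})$ to eliminate $\mu$. Your algebraic identity $\xi m'(\xi)-m(\xi)=\tfrac12\,m(\xi)/\sqrt{1/4+\xi^2/c^2}$ makes the final step slightly cleaner than the paper's manipulation, and for the regularity needed to justify the differentiation the paper invokes \cite[Proposition~1.1]{FL} (giving $u\in H^2$ and pointwise decay) rather than a direct bootstrap, but the substance is the same.
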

\begin{proof}
The proof of the result was proved in \cite{FL}, but we briefly sketch of the proof for the convenience of the reader.  
We multiply $u$ to   the equation \eqref{elqs} and then integration by parts shows 
\begin{equation}\label{neha}
\|\sqrt{\mathcal{H}_c}u\|_{L^2(\R)}^2+\mu\|u\|_{L^2(\R)}^2-\|u\|_{L^{p+1}}^{p+1}=0.
\end{equation}
On the other hand, from the result of \cite[Proposition 1.1]{FL}, we see that $u\in H^2(\R)$ and $|u|+|x\partial_x u|\le C(1+|x|^2)^{-1}.$ Then, by the Plancherel theorem and elementary calculations, we observe that
\begin{align*}
\left\langle \sqrt{-c^2\partial_x^2+\frac{c^4}{4}}u, x\partial_x u \right\rangle &=\frac{d}{d\lambda}\Bigg|_{\lambda=1}\left\langle \sqrt{-c^2\partial_x^2+\frac{c^4}{4}} u, u(\lambda \cdot) \right\rangle\\
&=\frac{d}{d\lambda}\bigg|_{\lambda=1}\frac{1}{2\pi}\int_{\R}\sqrt{c^2|\xi|^2+\frac{c^4}{4}} \hat{u}(\xi)  \overline{\frac{1}{\lambda}\hat{u}\left(\frac{\xi}{\lambda}\right)}d\xi\\
&=\frac{d}{d\lambda}\bigg|_{\lambda=1}\frac{1}{2\pi}\int_{\R} \sqrt{c^2|\xi|^2+\frac{c^4}{4\lambda}} \hat{u}(\sqrt{\lambda}\xi) \overline{ \hat{u}\left(\frac{\xi}{\sqrt{\lambda}}\right)}d\xi\\
&=-\frac{c^4}{16\pi}\int_{\R}\frac{|\hat{u}(\xi)|^2}{\sqrt{c^2|\xi|^2+\frac{c^4}{4 }}}d\xi
\end{align*}
and
\begin{align*}
\left\langle \left(-\frac{c^2}{2}+\mu\right) u-u^p, x\partial_x u\right\rangle&=\int_{\R}\frac12 \left(-\frac{c^2}{2}+\mu\right)  x\partial_x(u^2)-\frac{1}{p+1}x\partial_x(u^{p+1})dx\\
&=\frac12 \left(\frac{c^2}{2}-\mu\right)\|u\|_{L^2(\R)}^2+\frac{1}{p+1}\|u\|_{L^{p+1}}^{p+1}.
\end{align*}
Hence, combining these with $\frac{c^4}{4\sqrt{c^2|\xi|^2+\frac{c^4}{4 }}}=\sqrt{c^2|\xi|^2+\frac{c^4}{4 }}-\frac{c^2|\xi|^2}{\sqrt{c^2|\xi|^2+\frac{c^4}{4}}}$ and using the equation \eqref{elqs}, we obtain 
\begin{equation}\label{poho1}
-\frac12\|\sqrt{\mathcal{H}_c}u\|_{L^2(\R)}^2-\frac{\mu}{2}\|u\|_{L^2(\R)}^2+\frac{1}{p+1}\|u\|_{L^{p+1}}^{p+1}+\frac{c^2}{2}\int_{\R}\frac{|\xi|^2|\hat{u}(\xi)|^2}{\sqrt{c^2|\xi|^2+\frac{c^4}{4}}}d\xi=0.
\end{equation}
Thus, by \eqref{neha} and \eqref{poho1}, we prove the result.
\end{proof}
Next, we prove that the local minimizer $Q_c$ constructed in Theorem \ref{cptness} is a ground state. 
\begin{proposition}
 Assume that $p\in (3,5)$ and $c\geq \max\{(\alpha M)^\frac{p-1}{5-p}, (\alpha^\frac{4}{p+3}M)^\frac{p-1}{5-p},(\frac{M}{p-3})^\frac{p-1}{5-p}\}$. Then we have
$$
\mathcal{J}_c(M)=\inf \Big\{\mathcal{E}_c(u): \mathcal{E}_c|_\mathcal{N}'(u)=0 \Big\},
$$
where $\mathcal{N}=\{v\in H^\frac12(\R) \ : \ \mathcal{M}(v)=M\}$.
\end{proposition}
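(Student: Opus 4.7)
The plan is to prove the two inequalities separately. The bound $\mathcal{J}_c(M)\ge \inf\{\mathcal{E}_c(u):\mathcal{E}_c|_\mathcal{N}'(u)=0\}$ is immediate, because Lemma \ref{local min structure} combined with the hypothesis $c\ge(\alpha^{4/(p+3)}M)^{(p-1)/(5-p)}$ gives $\|\sqrt{\mathcal{H}_c}Q_c\|_{L^2(\R)}\le \alpha^{2/(5-p)}M^{(p+3)/(2(5-p))}\le c^{(p+3)/(2(p-1))}$, so the kinetic constraint in $\mathcal{J}_c(M)$ is strictly inactive at $Q_c$; consequently $Q_c$ is a free critical point of $\mathcal{E}_c|_\mathcal{N}$ realizing $\mathcal{J}_c(M)$. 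I will therefore focus on the reverse inequality: for every critical point $u\in\mathcal{N}$ of $\mathcal{E}_c|_\mathcal{N}$, show $\mathcal{E}_c(u)\ge \mathcal{J}_c(M)$.

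Such a $u$ satisfies $\mathcal{H}_c u-|u|^{p-1}u=-\mu u$ for some $\mu\in\R$. First I would extract two identities: pairing the equation with $u$ yields the Nehari relation $K+\mmu M=\|u\|_{L^{p+1}(\R)}^{p+1}$ with $K:=\|\sqrt{\mathcal{H}_c}u\|_{L^2(\R)}^2$, while Lemma \ref{pohole} supplies the Pohozaev identity $\tfrac12 K+J=\tfrac{p-1}{2(p+1)}\|u\|_{L^{p+1}(\R)}^{p+1}$, where
\begin{equation*}
J=\frac14\int_\R\frac{\sqrt{c^2|\xi|^2+c^4/4}-c^2/2}{\sqrt{1/4+|\xi|^2/c^2}}|\hat u(\xi)|^2\,d\xi\ge 0.
\end{equation*}
Factoring $c^2$ out of $\sqrt{c^2|\xi|^2+c^4/4}=c^2\sqrt{1/4+|\xi|^2/c^2}$ rewrites the kernel as $\tfrac{c^2}{2}\bigl(2-(1/4+|\xi|^2/c^2)^{-1/2}\bigr)$, which is pointwise bounded by $c^2$; by Plancherel this yields the crucial a priori bound $J\le c^2M/4$. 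Eliminating $\|u\|_{L^{p+1}(\R)}^{p+1}$ between the Nehari and Pohozaev identities then produces the clean representation
\begin{equation*}
\mathcal{E}_c(u)=\frac{(p-3)K-4J}{2(p-1)}.
\end{equation*}

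The argument is closed by a case split on the sign of $\mathcal{E}_c(u)$. If $\mathcal{E}_c(u)\ge 0$, there is nothing to prove, since Lemma \ref{minimum energy comparison} gives $\mathcal{J}_c(M)\le\mathcal{J}_\infty(M)<0$. If $\mathcal{E}_c(u)<0$, Corollary \ref{separation of initial data} forces the dichotomy $K\le \alpha^{4/(5-p)}M^{(p+3)/(5-p)}$ or $K>c^{(p+3)/(p-1)}$. In the first alternative the hypothesis $c\ge(\alpha M)^{(p-1)/(5-p)}$ gives $K\le c^{(p+3)/(p-1)}$, so $u$ is admissible for $\mathcal{J}_c(M)$ and $\mathcal{E}_c(u)\ge\mathcal{J}_c(M)$ is immediate. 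For the second alternative, the energy representation combined with $J\le c^2M/4$ yields
\begin{equation*}
\mathcal{E}_c(u)\ge\frac{(p-3)K-c^2 M}{2(p-1)}>\frac{c^2\bigl((p-3)c^{(5-p)/(p-1)}-M\bigr)}{2(p-1)},
\end{equation*}
and the third hypothesis $c\ge(M/(p-3))^{(p-1)/(5-p)}$ is exactly what makes this right-hand side nonnegative, contradicting $\mathcal{E}_c(u)<0$. This rules out the large-$K$ alternative and concludes the proof.

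The principal obstacle is the quantitative matching at this last step: the a priori bound $J\le c^2M/4$ must be married to the precise form of the assumption on $c$ so that in the large-$K$ regime the kinetic contribution $(p-3)K$ strictly dominates $4J$. The restriction $p>3$ is essential rather than technical, since at $p=3$ the representation degenerates to $\mathcal{E}_c(u)=-2J/(p-1)\le 0$ and the positivity of $(p-3)K$ that powers the contradiction in the large-$K$ regime is no longer available.
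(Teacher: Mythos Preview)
Your argument is correct and follows the same route as the paper: use the Pohozaev identity (Lemma~\ref{pohole}) to rewrite $\mathcal{E}_c(u)=\frac{(p-3)K-4J}{2(p-1)}$, bound $4J\le c^2M$, and conclude via the third hypothesis on $c$ that any negative-energy critical point satisfies $K\le c^{(p+3)/(p-1)}$ and is therefore admissible for $\mathcal{J}_c(M)$. Two cosmetic fixes: Lemma~\ref{pohole} is stated under the hypothesis $\mu>0$, so you should invoke it only after entering the case $\mathcal{E}_c(u)<0$, where your Nehari relation gives $\mu M=\|u\|_{L^{p+1}}^{p+1}-K>\tfrac{p-1}{2}K>0$; and in the small-$K$ branch the hypothesis that yields $\alpha^{4/(5-p)}M^{(p+3)/(5-p)}\le c^{(p+3)/(p-1)}$ is $c\ge(\alpha^{4/(p+3)}M)^{(p-1)/(5-p)}$, not $c\ge(\alpha M)^{(p-1)/(5-p)}$ --- though in fact the detour through Corollary~\ref{separation of initial data} is unnecessary, since the paper simply reads off $K\le\tfrac{c^2M}{p-3}\le c^{(p+3)/(p-1)}$ directly from the energy lower bound.
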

\begin{proof}
We assume on the contrary that   there exists a critical point $u$ for $\mathcal{E}_c$ on $\mathcal{N}$ with $\mathcal{E}_c(u)<\mathcal{J}_c(M)$. Then $u\in H^\frac12(\R)$ is a solution of 
$$
\mathcal{H}_cu-|u|^{p-1}u=-\mu u
$$
for some $\mu\in \R$. We observe that by \eqref{largr1} Lemma \ref{minimum energy comparison}, $\mu>0$. Then by 
Lemma \ref{minimum energy comparison} and Lemma \ref{pohole}, we get
\begin{align*}
0&>\mathcal{J}_c(M)>\mathcal{E}_c(u)\\
&=\frac{1}{2}\|\sqrt{\mathcal{H}_c}u\|_{L^2(\mathbb{R})}^2-\frac{1}{p+1}\|u\|_{L^{p+1}(\mathbb{R})}^{p+1}\\
&=\frac{1}{2}\left(1-\frac{2}{p-1}\right)\|\sqrt{\mathcal{H}_c}u\|_{L^2(\mathbb{R})}^2-\frac{c^2}{2(p-1)}\|u\|_{L^2(\R)}^2+\frac{c^4}{4(p-1)}\int_{\R}\frac{|\hat{u}(\xi)|^2}{\sqrt{c^2|\xi|^2+\frac{c^4}{4}}}d\xi\\
&\ge \frac{p-3}{2(p-1)} \|\sqrt{\mathcal{H}_c}u\|_{L^2(\mathbb{R})}^2-\frac{c^2}{2(p-1)}\|u\|_{L^2(\R)}^2,
\end{align*}
From this and the assumption $c\ge (\frac{M}{p-3})^\frac{p-1}{5-p}$, we have $ \|\sqrt{\mathcal{H}_c}u\|_{L^2(\mathbb{R})}\le \sqrt{\frac{M}{p-3}}c\le c^{\frac{p+3}{2(p-1)}},$ which implies that $\mathcal{J}_c(M)\le\mathcal{E}_c(u).$
\end{proof}

\end{document}